\def\R{\mathbb{R}}
\def\RN{\mathbb{R}^N}
\def\calL{\mathcal{L}}
\def\e{\varepsilon}
\def\ul{{ ul}}
\def\loc{{loc}}
\def\bw{\bar{w}}
\def\XXint#1#2#3{{\setbox0=\hbox{$#1{#2#3}{\int}$}
\vcenter{\hbox{$#2#3$}}\kern-.5\wd0}}
\newcommand{\di}{\mathop{}\!\mathrm{d}}
\DeclareMathOperator{\esssup}{ess\,sup}
\DeclareMathOperator{\supp}{supp}
\theoremstyle{plain}
	\newtheorem{theorem}{Theorem}[section]
	\newtheorem{lemma}[theorem]{Lemma}
	\newtheorem{corollary}[theorem]{Corollary}
	\newtheorem{definition}[theorem]{Definition}
	\newtheorem{proposition}[theorem]{Proposition}
	\newtheorem{remark}[theorem]{Remark}
\theoremstyle{plain}
\begin{document}
\title[Semilinear heat equations  with exponential growth]{
Threshold property of a singular stationary solution for
semilinear heat equations with exponential growth
}

\author{Kotaro Hisa}
\thanks{}
\address[Kotaro Hisa]{Graduate School of Mathematical Sciences, The University of Tokyo,
 3-8-1 Komaba, Meguro-ku, Tokyo 153-8914, Japan,
{\rm{\texttt{hisak@ms.u-tokyo.ac.jp}}}
}

\author{Yasuhito Miyamoto}
\thanks{ORCiD of YM is 0000-0002-7766-1849}
\thanks{YM was supported by JSPS KAKENHI Grant Number 24K00530.}
\address[Yasuhito Miyamoto]{Graduate School of Mathematical Sciences, The University of Tokyo,
 3-8-1 Komaba, Meguro-ku, Tokyo 153-8914, Japan,
{\rm{\texttt{miyamoto@ms.u-tokyo.ac.jp}}}
}

\begin{abstract}
Let $N\ge 3$.
We are concerned with a Cauchy problem of the semilinear heat equation
\[
\begin{cases}
\partial_tu-\Delta u=f(u), & x\in\RN,\ t>0,\\
u(x,0)=u_0(x), & x\in\RN,
\end{cases}
\]
where $f(0)=0$, $f$ is nonnegative, increasing and convex, $\log f(u)$ is convex for large $u>0$ and some additional assumptions are assumed.
We establish a positive radial singular stationary solution $u^*$ such that $u^*(x)\to\infty$ as $|x|\to 0$.
Then, we prove the following:
The problem has a nonnegative global-in-time solution if $0\le u_0\le u^*$ and $u_0\not\equiv u^*$, while the problem has no nonnegative local-in-time solutions $u$ such that $u\ge u^*$ if $u_0\ge u^*$ and $u_0\not\equiv u^*$.
\end{abstract}

\date{\today}
\subjclass[2020]{Primary: 35K15, 35A01, secondary 35A21, 35B44.}
\keywords{Weak solution; Integral solution; Existence and nonexistence; Immediate blow-up}
\maketitle

%\linenumbers

\bigskip

%%%%%%%%%%%%%%%%%%%%%%%%%%%%%%%%%%%%%%%%%%%%%%%%%%%%%%%
%%%%%%%%%%%%%%%%%%%%%%%%%%%%%%%%%%%%%%%%%%%%%%%%%%%%%%%
%%%%%%%%%%%%%%%%%%%%%%%%%%%%%%%%%%%%%%%%%%%%%%%%%%%%%%%
% Section 1
%%%%%%%%%%%%%%%%%%%%%%%%%%%%%%%%%%%%%%%%%%%%%%%%%%%%%%%
%%%%%%%%%%%%%%%%%%%%%%%%%%%%%%%%%%%%%%%%%%%%%%%%%%%%%%%
%%%%%%%%%%%%%%%%%%%%%%%%%%%%%%%%%%%%%%%%%%%%%%%%%%%%%%%
%\newpage

\section{\bf Introduction and main results}
This paper is concerned with the Cauchy problem for the  semilinear heat equation
\begin{equation}\label{S1E1}
	\left\{
	\begin{aligned}
		&\partial_t u  -\Delta  u =  f(u),	\quad && x\in \mathbb{R}^N,\,\, t>0,\\
		&u(x,0) =  u_0(x),		\quad && x\in \mathbb{R}^N,
	\end{aligned}
	\right.
\end{equation}
where  $N\geq3$ and $u_0$ is a nonnegative measurable function on $\mathbb{R}^N$.
When $f(u)=u^p$, problem~\eqref{S1E1} has an explicit positive singular stationary solution and \cite{GV97} showed the following: Under certain condition on $p>1$, problem~\eqref{S1E1} has a solution for initial data less than the singular solution, while a solution of problem~\eqref{S1E1} that is larger than the singular stationary solution does not exist.
When $f(u)=e^u$, the Cauchy-Dirichlet problem on the unit ball has an explicit positive singular stationary solution and \cite{PV95} proved this phenomenon.
Our aim is to construct a positive singular stationary solution of problem~\eqref{S1E1} and to prove this phenomenon when $f$ has an exponential or superexponential growth and the domain is $\R^N$.

Let $p_S$ denote the critical Sobolev exponent, {\it i.e.},
\[
p_S :=\frac{N+2}{N-2}.
\]	
Throughout this paper, we make the following assumptions on $f$:
\begin{itemize}
\item[(A1)]  $f\in C^1[0,\infty)\cap C^2(0,\infty)$, $f(0)=0$, and $f'(0)=0$;
\item[(A2)]  $f'(u)>0$ for $u>0$ and $f''(u)>0$ for $u>0$;
\item[(A3)] $g(u):=\log f(u)$ is convex for large $u>0$ and
\[
\lim_{u\to\infty}\frac{g''(u)}{g'(u)^2}=0;
\]
\item[(A4)] The following holds:
\begin{equation*}
Q(u):=uf(u) - (p_S+1) \int_{0}^u f(s) \, \di s \ge 0\quad\mbox{for}\ u\ge 0.
\end{equation*}
\end{itemize}
We define $f(u)=0$ for $u<0$, and this will be used in the proof of Theorem~\ref{S1T1} below.

For $x\in\mathbb{R}^N$ and $\rho>0$, set $B(x,\rho):=\left\{ y\in\RN;\ |x-y|<\rho\right\}$.
We define a uniformly local $L^p$ space as follows:
For $1\le p\le\infty$, 
\[
L^p_{\ul}(\RN):=\left\{ u\in L^p_{\loc}(\RN);\ \left\|u\right\|_{L^p_{ul}(\RN)}<\infty\right\},
\]
where
\[
\left\|u\right\|_{L^p_{\ul}(\RN)}:=
\begin{cases}
\displaystyle{\sup_{z\in\RN}\left(\int_{B(z,1)}|u(x)|^p \, \di x\right)^\frac{1}{p}} & \textrm{if}\ 1\le p<\infty,\\
\displaystyle{\esssup\displaylimits_{z\in\RN}\left\|u\right\|_{L^{\infty}(B(z,1))}} & \textrm{if}\ p=\infty.
\end{cases}
\]
It is obvious that $L^{\infty}_{\ul}(\RN)=L^{\infty}(\RN)$ and that $L^{p_1}_{\ul}(\RN)\subset L^{p_2}_{\ul}(\RN)$ if $1\le p_2\le p_1 <\infty$.
Let $\calL^p_{\ul}(\RN)$ denote the closure of the space of bounded uniformly continuous functions $BUC(\RN)$ in the space $L^p_{\ul}(\RN)$, {\it i.e.},
\begin{equation}\label{S3E0}
\calL^p_{\ul}(\RN):=\overline{BUC(\RN)}^{\left\|\,\cdot\,\right\|_{L^p_{\ul}(\RN)}}.
\end{equation}

Let $G= G(x,t)$ be the fundamental solution of 
\[
\partial_t v -\Delta v = 0 \quad \mbox{in} \quad \mathbb{R}^N\times (0,\infty),
\]
that is, 
\[
G(x,t) := \frac{1}{(4\pi t)^\frac{N}{2}} \exp \left(-\frac{|x|^2}{4t}\right)
\]
for $x\in \mathbb{R}^N$ and $t>0$.
For $u_0\in L^1_{\ul} (\mathbb{R}^N)$, let $S(t)u_0$ be
\[
[S(t)u_0](x) := \int_{\mathbb{R}^N} G(x-y,t) u_0(y) \, \di y
\]
for $x\in \mathbb{R}^N$ and $t>0$.

\begin{definition}\label{S1D1}
Let $u$ be a nonnegative measurable function on $\RN\times[0,T)$, where $T\in(0,\infty]$.
\begin{itemize}
\item[(i)] We say that $u$ is a solution of problem \eqref{S1E1} in $\mathbb{R}^N\times [0,T)$ if
\begin{equation}\label{S1D1E2}
\begin{split}
\infty>u(x,t)
&=\int_{\mathbb{R}^N}G(x-y, t )u_0(y) \, \di y+\int_0^t\int_{\mathbb{R}^N}G(x-y,t-s)f(u(y,s)) \, \di y\di s
\end{split}
\end{equation}
for a.a.~$(x,t)\in\R^N\times [0,T)$.
\item[(ii)] If  $u$ satisfies \eqref{S1D1E2} with $=$ replaced by $\ge$, then we say that $u$ is a supersolution in $\mathbb{R}^N\times [0,T)$.
\end{itemize}
\end{definition}

First, we construct a positive radial singular solution.
\begin{proposition}\label{S1P1}
Suppose that $N\ge 3$ and that $f$ satisfies {\rm (A1)}--{\rm (A4)}.
The problem
\begin{equation}\label{SS}
\begin{cases}
\Delta u+f(u)=0, & x\in\R^N\setminus\{0\},\\
u(x)\ge 0, & x\in\R^N\setminus\{0\},\\
u(x)\ \mbox{is radial},\\
u(x)\to\infty\quad\mbox{as}\quad |x|\to 0,
\end{cases}
\end{equation}
has a unique solution $u^*\in C^2(\R^N\setminus\{0\})$, which is called a singular solution.
Moreover, $u^*\in \calL^1_{\ul}(\R^N)$ and $u^*$ satisfies
\[
u^*(x)=F^{-1}\left[\frac{|x|^2}{2N-4}(1+o(1))\right]\quad\mbox{as}\quad |x|\to 0,
\]
where $F^{-1}$ is the inverse function of $F$ defined by
\begin{equation}\label{F}
F(u)=\int_u^{\infty}\frac{ds}{f(s)} \quad\mbox{for}\quad u>0.
\end{equation}
\end{proposition}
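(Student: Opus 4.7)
The strategy is to pass to radial coordinates and construct $u^*$ as a limit of regular initial-value solutions of
\[
u''(r)+\tfrac{N-1}{r}u'(r)+f(u(r))=0,\qquad r>0.
\]
For each $\alpha>0$, let $u_\alpha$ denote the unique $C^2$ solution of the Cauchy problem with $u_\alpha(0)=\alpha$ and $u_\alpha'(0)=0$; assumptions (A1)--(A2) give local existence together with initial concavity, so $u_\alpha$ is strictly decreasing on its maximal interval. The Pohozaev-type hypothesis (A4), obtained by multiplying the equation by $r^{N-1}(ru'+\tfrac{N-2}{2}u)$ and integrating, controls the decay of $u_\alpha$ and, combined with the monotonicity in $\alpha$, forces $u_\alpha$ to remain positive on $(0,\infty)$ for large $\alpha$.

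Next I would define $u^*(r):=\lim_{\alpha\to\infty}u_\alpha(r)$. A Sturm-type comparison gives monotonicity of $\alpha\mapsto u_\alpha(r)$ up to the first intersection, so this limit exists in $[0,\infty]$. Standard local $C^2$ estimates on compact subsets of $(0,\infty)$ show that $u^*$ is $C^2$ and solves $\Delta u+f(u)=0$ classically on $\R^N\setminus\{0\}$; since $u_\alpha(0)=\alpha\to\infty$ and each $u_\alpha$ is decreasing in $r$, one obtains $u^*(r)\to\infty$ as $r\to 0$.

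For the asymptotic formula, set $H(r):=F(u^*(r))$. Since $F'(u)=-1/f(u)$, a direct computation using the radial equation gives
\[
H''(r)+\tfrac{N-1}{r}H'(r)=1+(H'(r))^2 f'(u^*(r)).
\]
The ansatz $H_0(r)=r^2/(2N-4)$ satisfies the companion identity $\Delta H_0=N/(N-2)$. Under (A3), an L'Hopital computation yields $F(u)f(u)g'(u)\to 1$ as $u\to\infty$, which translates, once $H\sim H_0$ is known, into $r^2 f'(u^*(r))\to 2N-4$; this is precisely the constant for which $(H_0')^2 f'(u^*)$ contributes $2/(N-2)$, matching $N/(N-2)-1$. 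A Gronwall-type estimate on $\phi(r):=(2N-4)H(r)/r^2$ near $r=0$, using the boundary condition $H(r)\to 0$, closes the loop and gives $\phi(r)\to 1$, which is the stated asymptotic.

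Uniqueness follows by a comparison/Sturmian argument: two singular solutions that share the same leading-order behavior at $r=0$ cannot differ, as their difference would solve a linear equation with bounded coefficient on any compact subinterval of $(0,\infty)$ while decaying faster than admissible at $0$. Finally, $u^*\in\calL^1_{\ul}(\R^N)$ because the asymptotic makes $r^{N-1}F^{-1}(r^2/(2N-4))$ integrable near $r=0$ under (A3) (so $u^*$ is locally $L^1$), while the monotone decay of $u^*$ in $r$ controls its behavior on exterior regions. The main obstacle will be the Gronwall step for $\phi$: the nonlinear feedback between $(H')^2$ and the rapidly growing $f'(u^*)$ must be tamed using only (A3), which I expect to require an iterative bootstrap from a rough upper bound rather than a single closed-form estimate.
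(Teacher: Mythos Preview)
Your strategy is close in spirit to the paper's, but the paper takes a shortcut you do not: the local existence, uniqueness, and asymptotic expansion of the singular solution on a small interval $(0,r_1]$ are all imported from \cite{MN23} (stated as Proposition~\ref{S2P1}), whose hypotheses are verified via Lemma~\ref{S3L2} (the consequence $f'(u)F(u)\to 1$ of (A3)). The paper's own work is then to extend $u^*$ to all $r>0$ and prove positivity there via the Pohozaev functional $P(u^*(r))$ together with (A4); the key technical point is that Lemma~\ref{S3L3} (a quantitative consequence of the asymptotic, giving $u^*(r)\le Cr^{-2\delta}$ and $|(u^*)'(r)|\le Cr^{-1-2\delta}$) forces $P(u^*(\rho))\to 0$ as $\rho\to 0$. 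Your variant---running Pohozaev on the regular $u_\alpha$ instead---is actually cleaner at this step, since $P(u_\alpha(0))=0$ is automatic and no singular estimates are needed; and it gives positivity for every $\alpha>0$, not just large $\alpha$.

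There are two places where your sketch is thinner than it looks. First, $\calL^1_{\ul}$ is by definition the \emph{closure} of $BUC$ in $L^1_{\ul}$, so local integrability of $u^*$ plus monotone decay at infinity is not enough; the paper explicitly truncates via $u_n:=\min\{n,u^*\}\in BUC$ and uses dominated convergence together with the bound $u^*\le Cr^{-2\delta}$ to obtain $\|u^*-u_n\|_{L^1_{\ul}}\to 0$. Second, your direct asymptotic argument through $H(r)=F(u^*(r))$ is a genuinely different route from \cite{MN23}, and the circularity you flag---needing $H\sim r^2/(2N-4)$ to control $f'(u^*)$, which in turn is what you want to prove---is exactly the hard part that \cite{MN23} handles. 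A single Gronwall will not close it, because $(H')^2 f'(u^*)$ is of the same order as the main term; a genuine bootstrap is required. Relatedly, the monotonicity-in-$\alpha$ claim you make for the limit construction is not straightforward for supercritical nonlinearities (regular solutions with different $\alpha$ typically intersect), so the mere existence of $\lim_{\alpha\to\infty}u_\alpha(r)$ already needs compactness plus identification, not a monotone limit. If you want to avoid citing \cite{MN23}, these two points are where the real work lies.
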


We are in a position to state the main theorem on the solvability of problem~\eqref{S1E1}.
\begin{theorem}\label{S1T1}
Suppose that $N\ge 3$ and that $f$ satisfies {\rm (A1)}--{\rm (A4)}.
Let $u^*$ be the singular solution of problem~\eqref{SS} given in Proposition~{\rm \ref{S1P1}}.
Then the following hold:
\begin{itemize}
\item[(i)] If $u_0\in L^1_{\ul}(\R^N)$, $u_0 \le u^*$ a.e.~in $\R^N$, and $u_0 \not\equiv u^*$ a.e. in $\mathbb{R}^N$ (and hence $u_0<u^*$ on a set of positive measure), then problem~\eqref{S1E1} has a solution in $\mathbb{R}^N\times[0,\infty)$ in the sense of Definition~{\rm \ref{S1D1}}.
In particular, this solution satisfies the equation in the classical sense for all $(x,t)\in \RN\times (0,\infty)$;
\item[(ii)] If $u_0 \equiv u^*$ a.e.~in $\mathbb{R}^N$, then $u_0$ is a singular stationary solution of problem~\eqref{S1E1} in the sense of Definition~{\rm \ref{S1D1}};
\item[(iii)] If $u_0\in L^1_{\ul}(\R^N)$, $u_0 \ge u^*$ a.e.~in $\R^N$, and $u_0 \not\equiv u^*$ a.e.~in $\mathbb{R}^N$ (and hence $u^*<u_0$ on a set of positive measure), then problem~\eqref{S1E1} has no local-in-time  solutions such that $u(x,t)\ge u^*(x)$ for a.a.~$(x,t)\in\RN\times [0,T)$ in the sense of Definition~{\rm \ref{S1D1}} for any $T>0$. 
\end{itemize}
\end{theorem}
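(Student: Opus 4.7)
For (ii), I would upgrade the classical relation $\Delta u^* + f(u^*) = 0$ on $\R^N\setminus\{0\}$ to a global distributional identity on $\R^N$ and then apply Duhamel's formula. Since $u^*\in \calL^1_{\ul}(\R^N)$ and the near-origin asymptotic in Proposition~\ref{S1P1} together with the growth of $f$ ensures $f(u^*)\in L^1_{\loc}(\R^N)$, the distribution $\Delta u^* + f(u^*)$ on $\R^N$ is supported at $\{0\}$. The mild singularity of $u^*$ is weaker than the fundamental solution of $-\Delta$, reflecting the supercriticality encoded in (A4); hence a standard removable-singularity argument forces the residual distribution at the origin to vanish. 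With $-\Delta u^* = f(u^*)$ globally, differentiating $S(t)u^*$ in $t$ and integrating yields $u^* = S(t) u^* + \int_0^t S(t-s) f(u^*) \, \di s$.

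For (i), I use monotone iteration: set $u_1\equiv 0$ and
\[
u_{n+1}(x,t) := [S(t) u_0](x) + \int_0^t [S(t-s) f(u_n(\cdot,s))](x) \, \di s.
\]
Monotonicity of $f$ gives $u_n\le u_{n+1}$. Using (ii) together with $u_0\le u^*$, induction gives $u_n\le u^*$: if $u_n\le u^*$ then
\[
u_{n+1} \le S(t) u_0 + \int_0^t S(t-s) f(u^*) \, \di s \le S(t) u^* + \int_0^t S(t-s) f(u^*) \, \di s = u^*.
\]
The monotone limit $u$ satisfies \eqref{S1D1E2} with $0 \le u \le u^*$. For the classical regularity claim, the smoothing of $S(t)$ on $L^1_{\ul}(\R^N)$ data together with parabolic bootstrap controls $u$ on compact subsets of $\R^N\times(0,\infty)$ away from the origin; near the origin for $t>0$, the strict inequality $u_0\not\equiv u^*$ yields $S(t)(u^*-u_0)>0$ pointwise by the strong maximum principle, and iterating through the integral equation keeps $u$ locally bounded, after which classical parabolic regularity applies.

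For (iii), I argue by contradiction. Suppose $u$ solves \eqref{S1D1E2} on $[0,T)$ with $u\ge u^*$ a.e. Subtracting the integral identity for $u^*$ from (ii) and using $f(u)-f(u^*)\ge f'(u^*)(u-u^*)$ by convexity of $f$, the function $w := u - u^* \ge 0$ satisfies
\[
w(x,t) \ge [S(t)\eta](x) + \int_0^t [S(t-s)(f'(u^*)\,w(\cdot,s))](x) \, \di s,
\]
where $\eta := u_0-u^* \ge 0$ and $\eta \not\equiv 0$. Iterating gives $w \ge \sum_{n\ge 0} I_n$, where $I_0 := S(t)\eta > 0$ and $I_{n+1} := \int_0^t S(t-s)(f'(u^*) I_n(\cdot,s)) \, \di s$. \emph{The principal obstacle} is to prove this series diverges on a set of positive measure; this forces $u=\infty$ there, contradicting Definition~\ref{S1D1}. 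The expected mechanism is that Proposition~\ref{S1P1} and (A3) yield a strongly singular potential $f'(u^*(x))\ge c|x|^{-2}$ near the origin, placing the linearized heat operator $\partial_t - \Delta - f'(u^*)$ above a Hardy-type threshold, following a Baras--Goldstein argument; iterating the heat kernel against this inverse-square potential then amplifies $I_0$ into a divergent series and delivers the contradiction.
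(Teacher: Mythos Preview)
Your argument for (ii) is essentially the paper's: the paper computes boundary terms on $\partial B(0,\e)$ using the estimates of Lemma~\ref{S3L3} rather than invoking removable singularities abstractly, but the content is the same.

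For (i), the monotone iteration from below is correct and matches the paper's construction of the minimal solution, but your regularity sketch has a real gap. Knowing $u\le u^*-\delta$ near the origin for $t\ge t_0$ (from $S(t)(u^*-u_0)>0$) does \emph{not} by itself give local boundedness: $u^*-\delta$ is still singular. The paper exploits the log-convexity in (A3) twice (Lemmas~\ref{S3L5} and~\ref{S3L6}): first $u\le u^*-\delta$ gives $f(u)\le\gamma_0 f(u^*)$ with $\gamma_0<1$; a second pass through the Duhamel formula then gives $u\le \gamma_1 u^*+C$ with $\gamma_1<1$; and finally $f(\gamma_1 u^*)\le C|x|^{-2\gamma_1}\in L^{N/2+\e}_{\ul}$, after which the smoothing of $S(t)$ yields $L^\infty$. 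Without invoking the exponential structure of $f$ this bootstrap does not close, and ``iterating through the integral equation keeps $u$ locally bounded'' hides exactly the step that requires (A3).

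For (iii), the linearization route has a structural obstruction and will not work in all dimensions. From Lemma~\ref{S3L2} and Proposition~\ref{S1P1} one has $f'(u^*(x))\sim 2(N-2)|x|^{-2}$ as $|x|\to 0$. The Baras--Goldstein nonexistence threshold for $\partial_t w-\Delta w=c|x|^{-2}w$ is $c>(N-2)^2/4$, and $2(N-2)>(N-2)^2/4$ holds only for $N\le 9$. For $N\ge 10$ the potential $f'(u^*)$ is at or below the Hardy constant, the linear equation $\partial_t w=\Delta w+f'(u^*)w$ is well-posed, and your iterated series $\sum I_n$ converges; the contradiction never materializes. The paper avoids linearization entirely and uses a fully nonlinear mirror comparison: with $\psi=u-u^*\ge 0$, convexity gives $f(u^*+\psi)-f(u^*)\ge f(u^*)-f(u^*-\psi)$, which lets one compare $\psi$ with $\phi=u^*-w$ where $w$ is the \emph{maximal} solution of \eqref{S1E1} with some data $w_0\le u^*$, $w_0\not\equiv u^*$. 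Since $w\in L^\infty_{\loc}(0,\infty;L^\infty)$ by part~(i), this yields $u\ge\tfrac32 u^*$ on a small ball for $t>t_1$. A localized version (Lemma~\ref{S6L5}) then bootstraps $u>\alpha u^*$ to $u>(\alpha+\tfrac12)u^*$ on a smaller ball; after finitely many steps, log-convexity gives $f(u)\ge f(\alpha^* u^*)\ge f(u^*)^{\alpha^*}\notin L^1_{\loc}$ for $\alpha^*$ large, contradicting the integrability required by Definition~\ref{S1D1}. It is this nonlinear iteration, not a Hardy-potential argument, that makes the proof dimension-independent.
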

Theorem~\ref{S1T1} says that the singular solution $u^*$ of problem~\eqref{SS} is a threshold of the existence and nonexistence of nonnegative solutions of problem~\eqref{S1E1}.
The assumption (A3) is technical.
We expect that Theorem~\ref{S1T1} holds when $f$ satisfies a certain growth condition and the limit $\lim_{u\to\infty}f'(u)^2/f(u)f''(u)$ exists.

\begin{remark}
In this paper we do not treat with the uniqueness of the solution.
The situation is delicate.
If $f(u)=u^p$ and $u_0$ is the explicit singular stationary solution, then it was shown in \cite[Theorem 10.1]{GV97} that \eqref{S1E1} has at least two positive solutions for a certain range of $p>1$.
See also \cite{MT03,NS85,SW03,T02,T21} for the non-uniqueness in other pure power cases.
If $f(u)=e^u$, $u_0$ is the explicit singular stationary solution, and the domain is a unit ball, then it was shown in \cite[Theorem 16.1]{GV97} that the Cauchy-Dirichlet problem has at least two positive solutions for $3\le N\le 9$.
In the case of our class of nonlinear terms the uniqueness of a solution is an open problem if the initial data is singular.
\end{remark}
\bigskip

Let us recall related results.
Weissler~\cite{W80} started studying the solvability of problem~\eqref{S1E1} with $f(u)=|u|^{p-1}u$ when $u_0\not\in L^{\infty}(\R^N)$.
In \cite{W80} the following was proved:
If $u_0\in L^q(\R^N)$ for either $q=N(p-1)/2>1$ or $q>N(p-1)/2$ and $q\ge 1$, then problem~\eqref{S1E1} has a local-in-time solution, while there exists a nonnegative initial function $u_0\in L^q(\R^N)$ for some $1\le q<N(p-1)/2$ such that problem~\eqref{S1E1} has no nonnegative local-in-time solutions.
When $p>N/(N-2)$, problem~\eqref{S1E1} with $f(u)=|u|^{p-1}u$ has a singular stationary solution
\[
u^*(x)=L|x|^{-\frac{2}{p-1}},\ \ L:=\left\{\frac{2}{p-1}\left(N-2-\frac{2}{p-1}\right)\right\}^{1/(p-1)}.
\]
We easily see that, for small $\e>0$,
\begin{equation}\label{S1E4}
\int_{B(0,\e)}u^*(x)^q\di x=C_N\int_0^{\e}r^{-\frac{2q}{p-1}+N-1}\di r
\begin{cases}
=\infty & \textrm{for}\ q=\frac{N(p-1)}{2},\\
<\infty & \textrm{for}\ 1\le q<\frac{N(p-1)}{2}.
\end{cases}
\end{equation}
Roughly speaking, \eqref{S1E4} indicates that $u^*$ is a threshold that separates the existence and nonexistence of local-in-time solutions.

Now, it is believed that a singularity of a singular stationary solution gives an optimal singularity in a wide class of semilinear parabolic equations.
To be more precise, if a nonnegative initial function $u_0$ has a singularity weaker (resp.~stronger) than a singular stationary solution $u^*$, then problem~\eqref{S1E1} has a nonnegative local-in-time solution (resp.~no nonnegative local-in-time solutions).
This type of existence and nonexistence theorems can be found in \cite{BP85,BC96, FHIL23,FHIL24,FI18,HI18,IKO20,IRT19,LRSV16,M21,MS24,W80,W86}.

Next, let us study our problem~\eqref{S1E1} by using \cite{FI18,FHIL24}.
Applying \cite[Theorems~1.1 and 1.2]{FI18} to our problem, we see the following:
Assume that (A1)--(A4) hold. If $u_0\ge 0$ and
\begin{equation}\label{S1E5}
\frac{1}{F(u_0)^{N/2}}\in \calL^1_{ul}(\R^N),
\end{equation}
then problem~\eqref{S1E1} has a nonnegative local-in-time solution which is classical on $\R^N\times (0,T)$ for some $T>0$, while there exists a nonnegative measurable function $u_0$ such that
\begin{equation}\label{S1E6}
\frac{1}{F(u_0)^q}\in L^1_{ul}(\R^N)\ \ \textrm{for some}\ \ 0<q<\frac{N}{2}
\end{equation}
and that problem~\eqref{S1E1} has no nonnegative local-in-time solutions.
Here, we consider the case $u_0=u^*$. By Proposition~\ref{S1P1} we see that
\[
\frac{1}{F(u^*)}=\frac{2N-4}{|x|^2}(1+o(1))\ \ \textrm{as}\ \ |x|\to 0.
\]
Then, for small $\e>0$, 
\begin{equation}\label{S1E7}
\int_{B(0,\e)}\frac{\di x}{F(u^*)^q}\sim C_N\int_0^{\e}r^{-2q+N-1}{\di r}
\begin{cases}
=\infty & \textrm{for}\ q=\frac{N}{2},\\
<\infty & \textrm{for}\ 0<q<\frac{N}{2}.
\end{cases}
\end{equation}
Therefore, $1/F(u^*)^{N/2}\not\in \calL^1_{ul}(\R^N)$.
Since (\ref{S1E5}) is not satisfied, \cite[Theorem~1.1]{FI18} does not guarantee the existence of a nonnegative local-in-time solution.
Moreover, \eqref{S1E7} indicates that $u^*$ is a threshold that separates the existence \eqref{S1E5} and the nonexistence \eqref{S1E6}.
Recently, \cite{FHIL24} proved the existence of a nonnegative local-in-time solution when $1/F(u_0)\le\e |x|^{-2}$ for a.a.~$x\in\mathbb{R}^N$ and for sufficiently small $\e>0$,
while they also proved the nonexistence of nonnegative local-in-time solutions 
when $1/F(u_0)\ge\gamma |x|^{-2}$ for a.a.~$x$ in a neighbourhood of the origin and for sufficiently large $\gamma>0$ (see \cite[Theorem~1.1]{FHIL24}).
By Proposition~\ref{S1P1} this implies that when $u_0$ has the same singularity at the origin as $u^*$, a nonnegative local-in-time solution may or may not exist depending on the magnitude of a coefficient.
When $u_0=u^*$, previous studies including \cite{W80,FHIL24,FI18} cannot guarantee the existence of a nonnegative local-in-time solution of problem~\eqref{S1E1}, though $u^*(x)$ itself is a stationary solution to problem~\eqref{S1E1}.

Let us go back to our results.
Using Theorem~\ref{S1T1}, we see the following:
If $u_0$ consists of $u^*$ plus any small positive bump, then no nonnegative local-in-time solutions satisfying $u(x,t)\ge u^*(x)$ exist even if this positive bump is located far away from the singular point, which is the origin.
On the other hand, if $u_0$ consists of $u^*$ plus any small negative bump, then a nonnegative global-in-time solution $u$ exists and $u$ immediately becomes bounded for every $t>0$.
Therefore, not only the strength of a singularity but also a whole shape of $u_0$ is important for the existence and nonexistence of  solutions.
Indeed, $u^*$ is a threshold.
As mentioned in Section 1, this type of theorems were proved for the Cauchy problem for $\partial_tu-\Delta u=u^p$ in \cite{GV97} and for the Cauchy-Dirichlet problem for $\partial_tu-\Delta u=2(N-2)e^u$ on a unit ball in \cite{PV95}.
Proofs in \cite{GV97,PV95} use specialities of $u^p$ and $e^u$.
The aim of the present paper is to prove Theorem~\ref{S1T1} for a rather wide class of nonlinearities.
\medskip

Let us mention a strategy of the proof.
First, we show that $u^*$ constructed in Proposition~\ref{S1P1} is a stationary singular solution in the sense of Definition~\ref{S1D1}.
In the proof of Theorem~\ref{S1T1}~(i) we use a monotone method.
Specifically, we construct the maximal solution.
Since $u_0\le u^*$, a supersolution of problem~\eqref{S1E1} always exists.
It is enough to show that the maximal solution is bounded for every $t>0$.
We prove the boundedness by checking the integrability condition \eqref{S5L1E4-}.
The nonexistence of nonnegative local-in-time solutions is the main issue in the present paper.
We use a notion of weak solutions defined by Definition~\ref{S6D1}.
When the assumptions of Theorem~\ref{S1T1}~(iii) are satisfied, we show that \eqref{S1E1} has no nonnegative weak solutions, using a method developed in \cite{PV95} in the framework of weak solutions.
In Appendix it is proved that a solution in the sense of Definition~\ref{S1D1} is also a weak solution.
Hence, \eqref{S1E1} has no nonnegative local-in-time solutions in the sense of Definition~\ref{S1D1}.
\medskip

This paper consists of seven sections.
In Section~2 we give two examples for Theorem~\ref{S1T1}.
In Section~3 we construct a positive radial singular solution $u^*$ of problem~\eqref{SS} and prove Proposition~\ref{S1P1}.
We also collect useful lemmas.
In Section~4 we show that $u^*$ given in Proposition~\ref{S1P1} is a stationary solution of problem~\eqref{S1E1} in the sense of Definition~\ref{S1D1}.
Then, Theorem~\ref{S1T1}~(ii) is proved.
In Section~5 we construct a solution if $0\le u_0\le u^*$ and $u_0\not\equiv u^*$.
Then, Theorem~\ref{S1T1}~(i) is established.
In Section~6 we prove Theorem~\ref{S1T1}~(ii), using weak solutions defined by Definition~\ref{S6D1}.
Section 7 is an Appendix.
We show that if $u$ is a solution in the sense of Definition~\ref{S1D1}, then $u$ is a weak solution.

%%%%%%%%%%%%%%%%%%%%%%%%%%%%%%%%%%%%%%%%%%%%%%%%%%%%%%%%
%%%%%%%%%%%%%%%%%%%%%%%%%%%%%%%%%%%%%%%%%%%%%%%%%%%%%%%%
%%%%%%%%%%%%%%%%%%%%%%%%%%%%%%%%%%%%%%%%%%%%%%%%%%%%%%%%
% Section 2
%%%%%%%%%%%%%%%%%%%%%%%%%%%%%%%%%%%%%%%%%%%%%%%%%%%%%%%%
%%%%%%%%%%%%%%%%%%%%%%%%%%%%%%%%%%%%%%%%%%%%%%%%%%%%%%%%
%%%%%%%%%%%%%%%%%%%%%%%%%%%%%%%%%%%%%%%%%%%%%%%%%%%%%%%%
%\newpage
\section{\bf Two examples of nonlinearities $f$}
Roughly speaking, we will construct a nonnegative, increasing, and convex function satisfying:
\[
f(u)\sim
\begin{cases}
u^p & \textrm{for small $u>0$,}\\
e^{g(u)} & \textrm{for large $u>0$},
\end{cases}
\]
where $p\ge p_S$ and $g(u)$ is a convex function.
Moreover, (A4) has to be satisfied.
\subsection{Example 1}
The first example is 
\[
f(u)=u^pe^{u^q},\ \ p\ge p_S,\ \ q>1.
\]
It is obvious that (A1) and (A2) hold.
We check (A3) and (A4).
Since $g(u)=u^q+p\log u$, we have
\[
g''(u)=\frac{q(q-1)u^q-p}{u^2}>0\ \ \mbox{for large}\ \ u>0
\]
and
\[
\frac{g''(u)}{g'(u)^2}=\frac{q(q-1)u^q-p}{(qu^q+p)^2}\to 0\ \ \mbox{as}\ \ u\to\infty.
\]
Hence, (A3) holds.
Since $Q(0)=0$ and
\[
Q'(u)=(p+1)u^pe^{u^q}+qu^{p+q}e^{u^q}-(p_S+1)u^pe^{u^q}\ge 0\ \ \mbox{for}\ \ u\ge 0,
\]
we see that $Q(u)\ge 0$ for $u\ge 0$.
Thus, (A4) holds.

\subsection{Example 2}
The second example is
\[
f(u)=\chi(u)e^{au},
\]
where $a=20$, $\chi(u):=\int_0^u\chi'(s)ds$ and
\[
\chi'(u)=
\begin{cases}
0 & \mbox{if}\ u\ge 4,\\
5(u-4)^4 & \mbox{if}\ 3\le u\le 4,\\
10-5(u-2)^4 & \mbox{if}\ 1\le u\le 3,\\
5u^4 & \mbox{if}\ 0\le u\le 1.
\end{cases}
\]
In particular, $f\in C^1[0,\infty)\cap C^2(0,\infty)$ and
\[
f(u)=
\begin{cases}
u^5e^{au} & \mbox{for}\ 0\le u\le 1,\\
20e^{au} & \mbox{for}\ u\ge 4.
\end{cases}
\]
It is obvious that (A1) holds.
We check (A2)--(A4).
Since $|\chi'|\le 10$ and $|\chi''|\le 20$, by elementary calculation we can see that
\[
f''(u)=A^2\left(\chi+\frac{2\chi'}{a}+\frac{\chi''}{a^2}\right)e^{Au}\ge 0\ \ \mbox{for}\ \ u\ge 0.
\]
Then, (A2) holds.
Since $g(u)=au+\log 20$ for $u\ge 4$, we see that $g''(u)\ge 0$ for $u\ge 4$ and
$g''(u)/g'(u)^2=0$ for $u\ge 4$.
Hence, (A3) holds.
By elementary calculation we can check that $Q(0)=0$ and
\[
Q'(u)=\left(u\chi'+au\chi-p_S\chi\right)e^{au}\ge 0\ \ \mbox{for}\ \ u\ge 0.
\]
Then, $Q(u)\ge 0$ for $u\ge 0$.
Thus, (A4) holds.

%%%%%%%%%%%%%%%%%%%%%%%%%%%%%%%%%%%%%%%%%%%%%%%%%%%%%%%%
%%%%%%%%%%%%%%%%%%%%%%%%%%%%%%%%%%%%%%%%%%%%%%%%%%%%%%%%
%%%%%%%%%%%%%%%%%%%%%%%%%%%%%%%%%%%%%%%%%%%%%%%%%%%%%%%%
% Section 3
%%%%%%%%%%%%%%%%%%%%%%%%%%%%%%%%%%%%%%%%%%%%%%%%%%%%%%%%
%%%%%%%%%%%%%%%%%%%%%%%%%%%%%%%%%%%%%%%%%%%%%%%%%%%%%%%%
%%%%%%%%%%%%%%%%%%%%%%%%%%%%%%%%%%%%%%%%%%%%%%%%%%%%%%%%
\section{\bf Existence of a singular solution of problem~\eqref{SS}}
\begin{lemma}\label{S3L2}
Suppose that {\rm (A1)}--{\rm (A3)}.
Then,
\[
\lim_{u\to\infty}f'(u)F(u)=1.
\]
\end{lemma}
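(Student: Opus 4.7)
The approach is to recognize $f'(u)F(u)$ as an indeterminate form $0\cdot\infty$, rewrite it as a $0/0$ quotient, and apply L'Hôpital's rule; assumption (A3) then closes the argument in one step. Concretely, I would rewrite
\[
f'(u)F(u)=\frac{F(u)}{1/f'(u)}.
\]
The numerator tends to $0$ because $F(u)\to 0$ as $u\to\infty$ (the integral defining $F$ converges, as verified below), and the denominator tends to $0$: by (A2), $f''>0$, so $f'$ is strictly increasing, and if $f'$ were bounded then $f$ would grow at most linearly, contradicting integrability of $1/f$ at infinity; hence $f'(u)\to\infty$.

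Applying L'Hôpital's rule, using $F'(u)=-1/f(u)$, the limit equals
\[
\lim_{u\to\infty}\frac{-1/f(u)}{-f''(u)/f'(u)^2}=\lim_{u\to\infty}\frac{f'(u)^2}{f(u)f''(u)}.
\]
The key step is connecting this ratio to (A3). From $g=\log f$ one computes $g'=f'/f$ and $g''=(f''f-(f')^2)/f^2$, so
\[
\frac{g''(u)}{g'(u)^2}=\frac{f(u)f''(u)}{f'(u)^2}-1.
\]
The hypothesis $g''/(g')^2\to 0$ in (A3) is therefore exactly equivalent to $f(u)f''(u)/f'(u)^2\to 1$, and the lemma follows.

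The only point requiring care is the finiteness of $F$. Since $g$ is convex for $u\ge u_0$ by (A3), $g'$ is nondecreasing on $[u_0,\infty)$, and $g'(u_0)=f'(u_0)/f(u_0)>0$ by (A2); hence $g'(u)\ge c:=g'(u_0)>0$ for $u\ge u_0$, which gives $f(u)\ge f(u_0)e^{c(u-u_0)}$, and $1/f$ is integrable near infinity. With this in hand, L'Hôpital's rule applies in the standard form. I do not anticipate a genuine obstacle; the statement is essentially a transparent consequence of (A3) once cast in the right quotient form.
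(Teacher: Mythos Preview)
Your proof is correct and follows the same core route as the paper: rewrite $f'(u)F(u)=F(u)/(1/f'(u))$, apply L'H\^opital, and use the identity $g''/(g')^2=ff''/(f')^2-1$ coming from $g=\log f$ to invoke (A3). The only minor difference is in justifying $f'(u)\to\infty$: the paper first observes (via the same identity) that $f'(u)^2/(f(u)f''(u))\le 2$ for large $u$, integrates the resulting inequality $f'/f\le 2f''/f'$ to get $f\le C(f')^2$, and concludes from $f\to\infty$; you instead first establish integrability of $1/f$ from the convexity of $g$ (which also certifies that $F$ is well-defined, a point the paper leaves implicit), and then argue that a bounded $f'$ would force at most linear growth of $f$, contradicting that integrability. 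Both arguments are valid; yours is slightly more self-contained in that it verifies the finiteness of $F$ along the way.
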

\begin{proof}
By (A3) we have
\[
\lim_{u\to\infty}\frac{f'(u)^2}{f(u)f''(u)}=\lim_{u\to\infty}\frac{1}{1+g''(u)/g'(u)^2}=1.
\]
Then, if $u>0$ is large, we have $f'(u)^2/f(u)f''(u)\le 2$, and hence
\[
\frac{f'(u)}{f(u)}\le 2\frac{f''(u)}{f'(u)}
\]
for large $u>0$.
Solving the differential inequality, we have $f(u)\le Cf'(u)^2$ for large $u>0$.
It follows from (A1) and (A2) that $f(u)\to \infty$ as $u\to\infty$.
Thus, $f'(u)\to\infty$ as $u\to\infty$.
Then, By L'Hospital's rule we have
\[
\lim_{u\to\infty}\frac{F(u)}{1/f'(u)}
=\lim_{u\to\infty}\frac{-1/f(u)}{-f''(u)/f'(u)^2}
=\lim_{u\to\infty}\frac{f'(u)^2}{f(u)f''(u)}.
\]
Since $f(u)=e^{g(u)}$, by (A3) we have
\[
\lim_{u\to\infty}\frac{f'(u)^2}{f(u)f''(u)}
=\lim_{u\to\infty}\frac{1}{1+g''(u)/g'(u)^2}=1.
\]
Thus, the proof is complete.
\end{proof}

Let us consider the equation
\begin{equation}\label{ODE}
u''+\frac{N-1}{r}u'+f(u)=0.
\end{equation}
As mentioned in Proposition~\ref{S1P1},  we call a solution $u^*$ of equation \eqref{ODE} a singular solution if $u^*(r)\to\infty$ as $r\to 0^+$.

For $\alpha>0$, let $u(r,\alpha)$ denote a solution of the initial value problem~\eqref{ODE} with $u(0,\alpha)=\alpha$ and $u_r(0,\alpha)=0$.
Because of Lemma~\ref{S3L2}, all the assumptions of \cite[Theorem~1.1 and Lemma~2.5]{MN23} follow from (A1)--(A3) and we will use the following version of \cite[Theorem~1.1 and Lemma~2.5]{MN23}:
\begin{proposition}\label{S2P1}
Suppose that $N\ge 3$ and that {\rm (A1)}--{\rm (A3)} hold.
Then, there exists a unique positive singular solution $u^*$ of equation \eqref{ODE} for $0<r\le r_1$ with some $r_1>0$, and the regular solution $u(r,\alpha)$ satisfies
\[
u(r,\alpha)\to u^*(r)\ \ \textrm{in}\ \ C^2_{loc}(0,r_1]\ \ \textrm{as}\ \ \alpha\to\infty.
\]
Furthermore, the positive singular solution $u^*$ satisfies
\begin{equation}\label{S2P1E2}
-r^{N-1}(u^*)'(r)=\int_0^rf(u^*(s))s^{N-1} \, \di s
\end{equation}
for $0<r<r_1$ and
\begin{equation}\label{S2P1E3}
u^*(r)=F^{-1}\left[\frac{r^2}{2N-4}(1+o(1))\right]\ \ \textrm{as}\ \ r\to 0,
\end{equation}
where $F^{-1}$ is the inverse function of $F(u)$ defined by \eqref{F}.
\end{proposition}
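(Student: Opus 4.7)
The plan is to obtain Proposition~\ref{S2P1} as an application of \cite[Theorem~1.1 and Lemma~2.5]{MN23}, with Lemma~\ref{S3L2} bridging the structural conditions (A1)--(A3) to the analytic hypotheses in the cited results.

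First, I would list the hypotheses required by \cite[Theorem~1.1]{MN23}: smoothness and vanishing of $f$ and $f'$ at $0$, strict positivity of $f'$ and $f''$ on $(0,\infty)$, and an asymptotic of the form $\lim_{u\to\infty} f'(u) F(u) = 1$. The first two are supplied directly by (A1) and (A2); the third is precisely the content of Lemma~\ref{S3L2}, which was derived from (A3). Granted these hypotheses, \cite[Theorem~1.1]{MN23} yields the existence and uniqueness of the positive singular solution $u^*$ on some interval $(0,r_1]$, and \cite[Lemma~2.5]{MN23} yields the convergence $u(r,\alpha) \to u^*(r)$ in $C^2_{loc}(0,r_1]$ as $\alpha \to \infty$.

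Next, identity \eqref{S2P1E2} is obtained by multiplying \eqref{ODE} by $r^{N-1}$ and integrating on $[0,r]$; for each regular solution $u(\cdot,\alpha)$ the boundary term at $0$ vanishes since $u_r(0,\alpha)=0$, and the identity for $u^*$ follows by passing to the limit $\alpha\to\infty$, using the $C^2_{loc}$-convergence together with monotone convergence (applicable because $u(\cdot,\alpha)$ is monotone increasing in $\alpha$ on $(0,r_1]$ by standard comparison). The asymptotic \eqref{S2P1E3} then follows from \eqref{S2P1E2} combined with Lemma~\ref{S3L2}: since $F'(u)=-1/f(u)$, differentiating $F(u^*(r))$ gives $-(u^*)'(r)/f(u^*(r))$, and the leading balance $(u^*)'' + (N-1)(u^*)'/r + f(u^*) \approx 0$, together with the identity $r^2 f'(u^*(r)) F(u^*(r)) \to 2(N-2)\cdot 1$ forced by $f'F\to 1$, pins the asymptotic to $F(u^*(r)) = r^2/(2N-4)\cdot(1+o(1))$ as $r\to 0$, which inverts to the stated expression.

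The main obstacle is the $1+o(1)$ control in \eqref{S2P1E3}: knowing only $f'F\to 1$ is borderline for extracting a clean leading-order asymptotic, and one must rule out logarithmic or other sub-leading corrections via a careful comparison argument. Once the leading balance is locked in, everything else reduces either to direct quotation of \cite{MN23} or to an elementary integration of the ODE.
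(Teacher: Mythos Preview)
Your core approach matches the paper's exactly: the paper does not prove Proposition~\ref{S2P1} at all but simply states it as a restatement of \cite[Theorem~1.1 and Lemma~2.5]{MN23}, noting just before the proposition that Lemma~\ref{S3L2} supplies the missing hypothesis $\lim_{u\to\infty} f'(u)F(u)=1$ so that (A1)--(A3) imply all the assumptions of \cite{MN23}. In particular, the paper regards the integral identity \eqref{S2P1E2} and the asymptotic \eqref{S2P1E3} as part of what is being quoted from \cite{MN23}, not as things to be re-derived here.

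Your additional sketches for \eqref{S2P1E2} and \eqref{S2P1E3} are therefore superfluous, and as written they are not airtight: the monotonicity-in-$\alpha$ claim used to justify monotone convergence for \eqref{S2P1E2} is not obvious for this class of nonlinearities (regular solutions can in principle intersect), and your heuristic for \eqref{S2P1E3} --- in particular the line ``$r^2 f'(u^*(r))F(u^*(r))\to 2(N-2)\cdot 1$'' --- does not follow from $f'F\to 1$ alone and is, as you yourself flag, the genuinely delicate step. Since the paper outsources this entirely to \cite{MN23}, you should do the same and drop the separate derivations.
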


\begin{lemma}\label{S3L3}
Suppose that $N\ge 3 $ and that {\rm (A1)}--{\rm (A3)} hold.
Let $u^*$ be the unique positive singular solution of equation \eqref{ODE} on $0<r<r_1$ given in Proposition~{\rm \ref{S2P1}}.
For any small $\delta>0$, there exist $R_0\in (0,r_1)$ and $C>0$ such that
\[
u^*(r) \le C r^{-2\delta}  \quad \mbox{and} \quad
|(u^*)'(r)| \le C r^{-1-2\delta} 
\]
for $r\in (0,R_0)$.
\end{lemma}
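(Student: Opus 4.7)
Both estimates follow from combining the asymptotics of Proposition~\ref{S2P1} with Lemma~\ref{S3L2}, after extracting a key growth property of $f$ from~(A3). The crucial observation is that under~(A3), $f$ grows at least exponentially: since $g:=\log f$ is convex on $[u_0,\infty)$ for some $u_0>0$ and $g'=f'/f>0$ by~(A1)--(A2), convexity forces $g'$ to be nondecreasing on $[u_0,\infty)$, so $g'(u)\ge g'(u_0)=:a>0$ for $u\ge u_0$. Integrating yields $f(u)\ge ce^{au}$ and $f'(u)=g'(u)f(u)\ge c'e^{au}$. Combined with Lemma~\ref{S3L2} (which gives $F(u)\sim 1/f'(u)$), this implies that for every $k>0$,
\[
F(u)\le C_k u^{-k}\quad\text{for all sufficiently large }u.
\]

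The bound $u^*(r)\le Cr^{-2\delta}$ is then immediate. Since $F^{-1}$ is decreasing, the above inverts to $F^{-1}(t)\le C_\delta t^{-\delta}$ for small $t>0$ (take $k=1/\delta$). Substituting $t=r^2(1+o(1))/(2N-4)$ from~\eqref{S2P1E3} and absorbing the $o(1)$ factor into the constant for $r\in(0,R_0)$ with $R_0$ small yields the claim on $u^*$.

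For the derivative bound, I start from~\eqref{S2P1E2}, which reads $|(u^*)'(r)|=r^{1-N}\int_0^r f(u^*(s))s^{N-1}\,\di s$. The asymptotic $F(u^*(s))\sim s^2/(2N-4)$ together with Lemma~\ref{S3L2} gives $f'(u^*(s))\sim(2N-4)/s^2$, and the identity $f=f'/g'$ combined with the lower bound $g'(u^*(s))\ge g'(u_0)>0$ from the first step (valid once $u^*(s)\ge u_0$, i.e., for $s$ sufficiently small) yields $f(u^*(s))\le C/s^2$. Inserting this gives $\int_0^r f(u^*(s))s^{N-1}\,\di s\le Cr^{N-2}$, so in fact $|(u^*)'(r)|\le C/r$, which is stronger than the stated bound for $r\in(0,R_0)$. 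The main step requiring care is the first one: translating~(A3) into the exponential lower bound on $f$ (and hence on $f'$), which in turn controls the decay rate of $F$; once that is in place, both estimates reduce to direct manipulations of the asymptotics already established in Proposition~\ref{S2P1} and Lemma~\ref{S3L2}.
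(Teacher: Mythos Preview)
Your proposal is correct but follows a somewhat different route from the paper. The paper argues as follows: from $f'(u)F(u)\le 1+\delta$ (Lemma~\ref{S3L2}) one integrates the inequality $f'/f\le -(1+\delta)F'/F$ to obtain $f(u)\le C\,F(u)^{-(1+\delta)}$; since $1/f=-F'$, a second integration yields $u\le C\,F(u)^{-\delta}+C$, and then the asymptotic $F(u^*(r))\sim r^2/(2N-4)$ gives $u^*(r)\le Cr^{-2\delta}$. The intermediate bound $f(u^*(r))\le Cr^{-2-2\delta}$ is fed into \eqref{S2P1E2} to produce $|(u^*)'(r)|\le Cr^{-1-2\delta}$.

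Your route instead exploits the convexity of $g=\log f$ directly to extract $g'\ge a>0$ on a half-line, hence $f(u),f'(u)\gtrsim e^{au}$; combined with $F(u)\sim 1/f'(u)$ this gives $F(u)\le C_k u^{-k}$ for every $k$, and inversion yields the same $u^*(r)\le Cr^{-2\delta}$. For the derivative you use $f=f'/g'$ with $g'\ge a$ to get $f(u^*(s))\le C/s^2$ rather than $C/s^{2+2\delta}$, and therefore the sharper conclusion $|(u^*)'(r)|\le C/r$. Both proofs ultimately rest on Lemma~\ref{S3L2} and \eqref{S2P1E3}; the paper stays entirely within the $(f',F)$ calculus and never names the exponential growth of $f$, whereas your argument makes that growth explicit and in return obtains a slightly better derivative estimate.
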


\begin{proof} 
Let $\delta>0$ be sufficiently small.
By Lemma~\ref{S3L2} we have
\[
\lim_{u\to\infty} f'(u) F(u)=1.
\]
Then there exists $M>0$ such that $f'(u) F(u) \le 1+\delta$ for all $u\ge M$.
This implies that
\[
\frac{f'(u)}{f(u)} \le \frac{1+\delta}{f(u) F(u)} = -\frac{(1+\delta) F'(u)}{F(u)}
\]
for all $u\ge M$.
Integrating the above inequality with respect to $u$ over $(M, u)$, we have
\[
\log \frac{f(u)}{f(M)} \le - (1+\delta)  \log\frac{F(u)}{F(M)}
\]
for all $u\ge M$.
Then 
\begin{equation}
\label{S3L3E1}
\frac{1}{f(M)F(M)^{1+\delta}} < \frac{1}{f(u)F(u)^{1+\delta}}  = - \frac{F'(u)}{F(u)^{1+\delta}}
\end{equation}
for all $u\ge M$.
Since $ u^*(r) \to \infty$ as $r\to 0$, there exists $r_2>0$ such that
\[
u^*(r) > M \quad \mbox{for} \quad r\in (0,r_2).
\]
Integrating the above inequality with respect to $u$ over $(M, u^*(r))$ again, we have
\[
u^*(r) < \frac{f(M)F(M)^{1+\delta}}{\delta} (F(u^*(r))^{-\delta}- F(M)^{-\delta}) +M
\]
for $r\in (0,r_2)$.
Since $F(u^*(r)) = (2N-4)^{-1}r^2(1+o(1))$ as $r\to0$ (by \eqref{S2P1E3}),
there exist $R_0\in (0, r_2)$ and $C>0$ such that
\[
u^*(r) \le C r^{-2\delta}  \quad \mbox{for} \quad r\in (0,R_0).
\]

By \eqref{S3L3E1} we have
\begin{equation}\label{S3L3E2}
f(u^*(r)) < \frac{f(M)F(M)^{1+\delta}}{F(u^*(r))^{1+\delta}} \le C r^{-2-2\delta}
\end{equation}
for $r\in (0,R_0)$. Therefore, by (\ref{S2P1E2}) we see that
\[
-r^{N-1} (u^*)'(r) 
= \int_0^r f(u^*(s)) s^{N-1} \, \di s
\le C \int_0^r s^{N-3-2\delta} \, \di s \le C r^{N-2-2\delta}.
\]
Then we have 
\[
|(u^*)'(r)| \le C r^{-1-2\delta} \quad \mbox{for} \quad r\in (0,R_0).
\]
Thus, the proof is complete.
\end{proof}

\begin{proof}[Proof of Proposition~{\rm \ref{S1P1}}]
Let $u^*$ denote the unique positive singular solution of equation~\eqref{ODE} on $0<r<r_1$ given in Proposition~\ref{S2P1}.
We extend the domain of $u^*$ such that $u^*$ satisfies equation \eqref{ODE} for all $r>0$.

We show that $u^*(r)>0$ for $r>0$.
Let $P(u^*(r))$ be defined by
\[
P(u^*(r)):=\frac{1}{2}r^N(u^*)'(r)^2+r^NF_0(u^*(r))+\frac{N-2}{2}r^{N-1}u^*(r)(u^*)'(r)
\]
for $r>0$,
where
\[
F_0(u):=\int_0^uf(s) \, \di s.
\]
Using (\ref{ODE}), we have
\[
\frac{d}{dr}P(u^*(r))=-\frac{N-2}{2}r^{N-1}Q(u^*(r)),
\]
where $Q$ is defined in (A4).
Integrating the above equality over $(\rho,r)$, we have
\begin{equation}\label{S1P1E5}
P(u^*(r))-P(u^*(\rho))=-\frac{N-2}{2}\int_{\rho}^rs^{N-1}Q(u^*(s)) \, \di s.
\end{equation}
Let $\delta>0$ be small such that $N-2-2\delta>0$.
By L'Hospital's rule we have
\[
\lim_{\rho\to 0}\left|\frac{F_0(u^*(\rho))}{\rho^{-N}}\right|
=\lim_{\rho\to 0}\left|\frac{f(u^*(\rho))(u^*)'(\rho)}{-N\rho^{-N-1}}\right|
\le\lim_{\rho\to 0}C\rho^{N-2-2\delta}=0,
\]
where Lemma~\ref{S3L3} and (\ref{S3L3E2}) are used.
We have
\begin{align*}
|P(u^*(\rho))|
&\le C\rho^{N-2-2\delta}+|\rho^NF_0(u^*(\rho))|+C\rho^{N-2-2\delta}\\
&\to 0\quad \mbox{as}\quad \rho\to 0.
\end{align*}
Assume that there exists $r_0>0$ such that $u^*(r_0)=0$.
Then, by Hopf's lemma, $(u^*)'(r_0)<0$.
Letting $\rho\to 0$ in (\ref{S1P1E5}) with $r=r_0$, we have
\[
0<P(u^*(r_0))-0=-\frac{N-2}{2}\int_0^{r_0}s^{N-1}Q(u^*(s)) \, \di s\le 0,
\]
which is a contradiction.
Thus, $r_0=\infty$, and hence $u^*(r)>0$ for $0\le r<\infty$.
It is obvious that the other properties (\ref{SS}) are satisfied.

Hereafter, we show that $u^*\in\calL^1_{\ul}(\R^N)$.
Let $u_n(x):=\min\{n,u^*(x)\}$, $n\ge 1$.
Then it is obvious that $u_n\in BUC(\RN)$.
Let $\delta>0$ be small such that $N-2\delta>0$.
Since $N-1-2\delta>-1$, by Lemma~\ref{S3L3} we have
\[
\int_{B(0,2)}|u^*(x)-u_n(x)|\,\di x\le C\int_0^2r^{N-1-2\delta}\,\di r<\infty.
\]
By the dominated convergence theorem we have
\[
\lim_{n\to\infty}\int_{B(0,2)}|u^*(x)-u_n(x)|\,\di x=0,
\]
which indicates that $\left\|u^*-u_n\right\|_{L^1_{\ul}(\RN)}\to 0$ as $n\to\infty$.
Thus, $u^*\in\calL^1_{\ul}(\RN)$.
The proof is complete.
\end{proof}

\begin{lemma}\label{S3L4}
Suppose that $N\ge 3 $ and that {\rm (A1)}--{\rm (A3)} hold.
Let $u^*$ be the positive  singular solution of equation \eqref{ODE} on $0<r<r_1$ given in Proposition~{\rm \ref{S2P1}}.
For any small $\delta>0$, there exist $R_0\in (0,r_1)$ and $C>0$ such that
\[
 f(u^*(r)) > Cr^{-2+2\delta}
\]
for $r\in (0,R_0)$.
\end{lemma}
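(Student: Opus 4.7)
The plan is to mirror the proof of Lemma~3.3 but with the inequality reversed, turning Lemma~\ref{S3L2} into a lower bound for $f(u)$ in terms of $F(u)$, and then inserting the asymptotic (\ref{S2P1E3}).

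Concretely, fix a small $\delta>0$. By Lemma~\ref{S3L2}, $f'(u)F(u)\to 1$ as $u\to\infty$, so there exists $M>0$ such that $f'(u)F(u)\ge 1-\delta$ for all $u\ge M$. Since $F'(u)=-1/f(u)$, this can be rewritten as the differential inequality
\[
\frac{f'(u)}{f(u)}\ge \frac{1-\delta}{f(u)F(u)}= -(1-\delta)\frac{F'(u)}{F(u)}\quad\mbox{for}\quad u\ge M.
\]
Integrating from $M$ to $u$ yields $\log(f(u)/f(M))\ge -(1-\delta)\log(F(u)/F(M))$, i.e.,
\[
f(u)\ge \frac{f(M)F(M)^{1-\delta}}{F(u)^{1-\delta}}\quad\mbox{for}\quad u\ge M.
\]

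Next, since $u^*(r)\to\infty$ as $r\to 0$, we have $u^*(r)\ge M$ on some interval $(0,r_2)$, and hence the above lower bound applies with $u=u^*(r)$. By (\ref{S2P1E3}),
\[
F(u^*(r))=\frac{r^2}{2N-4}\,(1+o(1))\quad\mbox{as}\quad r\to 0,
\]
so there exist $R_0\in(0,r_2)$ and a constant $C>0$ (depending on $N,\delta,M$) such that
\[
f(u^*(r))\ge \frac{f(M)F(M)^{1-\delta}}{F(u^*(r))^{1-\delta}}\ge C\,r^{-2(1-\delta)}=C\,r^{-2+2\delta}\quad\mbox{for}\quad r\in(0,R_0),
\]
which is the claimed estimate. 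The only real subtlety is reversing the direction of the inequality used in Lemma~\ref{S3L3}; the rest is bookkeeping with the asymptotic (\ref{S2P1E3}), so no serious obstacle is expected.
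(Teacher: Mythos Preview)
Your proof is correct and follows essentially the same approach as the paper's own proof: both use Lemma~\ref{S3L2} to obtain $f'(u)F(u)\ge 1-\delta$ for large $u$, integrate the resulting differential inequality to get $f(u)\ge f(M)F(M)^{1-\delta}/F(u)^{1-\delta}$, and then insert the asymptotic (\ref{S2P1E3}) for $F(u^*(r))$. The argument is indeed just the mirror image of Lemma~\ref{S3L3}, as you noted.
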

\begin{proof} 
Since 
\[
\lim_{u\to\infty} f'(u) F(u) =1, % < q_S,
\]
there exists $M>0$ such that $f'(u) F(u) \ge 1-\delta$ for all $u\ge M$.
This implies that
\[
\frac{f'(u)}{f(u)} \ge \frac{1-\delta}{f(u) F(u)} = -\frac{(1-\delta) F'(u)}{F(u)}
\]
for all $u\ge M$.
Integrating the above inequality with respect to $u$ over $(M, u)$, we have
\[
\log \frac{f(u)}{f(M)} \ge - (1-\delta)  \log\frac{F(u)}{F(M)}
\]
for all $u\ge M$.
Then 
\begin{equation*}
\frac{1}{f(M)F(M)^{1+\delta}} >\frac{1}{f(u)F(u)^{1-\delta}}  = - \frac{F'(u)}{F(u)^{1-\delta}}
\end{equation*}
for all $u\ge M$.
Since $ u^*(r) \to \infty$ as $r\to 0$, there exists $R_0'>0$ such that
\[
u^*(r) > M \quad \mbox{for} \quad r\in (0,R_0').
\]
Since $F(u^*(r)) = (2N-4)^{-1}r^2(1+o(1))$ as $r\to0$,
there exist $R_0\in (0, R_0')$ and $C>0$ such that
\begin{equation*}
f(u^*(r)) > \frac{f(M)F(M)^{1-\delta}}{F(u^*(r))^{1-\delta}} \ge C r^{-2+2\delta}
\end{equation*}
for $r\in (0,R_0)$. 
Thus, the proof is complete.
\end{proof}

\begin{lemma}\label{S3L5}
Suppose that $N\ge 3 $ and that {\rm (A1)}--{\rm (A3)} hold.
Let $u^*$ be the singular solution of equation \eqref{ODE} given in Proposition~{\rm \ref{S2P1}}.
For $\delta>0$, there exist $\gamma_0\in (0,1)$ and $R_0>0$ such that
\[
f(u^*(x)-\delta)\le\gamma_0f(u^*(x))\ \ \textrm{for}\ |x|<R_0.
\]
\end{lemma}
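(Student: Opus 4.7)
The plan is to exploit the convexity of $g = \log f$ on $[u_*,\infty)$ provided by (A3), which forces $g'$ to be nondecreasing near infinity. By (A2) we have $f>0$ and $f'>0$ on $(0,\infty)$, so $g'=f'/f$ is strictly positive, and in particular $c:=g'(u_*)>0$. Combining monotonicity with positivity yields the clean lower bound $g'(u)\ge c$ for every $u\ge u_*$, which is the only quantitative information about $f$ I will need.

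Once this is in place, I would apply the mean value theorem to $g$ on $[u-\delta,u]$ for any $u\ge u_*+\delta$: there exists $\xi\in(u-\delta,u)\subset[u_*,\infty)$ such that
\[
\log\frac{f(u)}{f(u-\delta)} = g(u)-g(u-\delta) = \delta\, g'(\xi) \ge \delta c,
\]
which rearranges to $f(u-\delta)\le e^{-\delta c}f(u)$.

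To finish, I would invoke Proposition~\ref{S1P1} (or equivalently Proposition~\ref{S2P1}), which gives $u^*(r)\to\infty$ as $r\to0$, and select $R_0>0$ so that $u^*(x)\ge u_*+\delta$ whenever $|x|<R_0$. Setting $\gamma_0:=e^{-\delta c}\in(0,1)$ and applying the previous step pointwise with $u=u^*(x)$ delivers the lemma.

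I do not expect any serious obstacle. The two bookkeeping points are that (A3) only guarantees convexity of $g$ beyond some threshold, so $u_*$ must be chosen past that threshold, and that $R_0$ must be small enough for $u^*(x)-\delta\ge u_*$, which uses the singularity of $u^*$ at the origin. Both are routine; the crux is the lower bound $g'\ge c>0$ on $[u_*,\infty)$, which is immediate from monotonicity of $g'$ (convexity of $g$) together with $g'(u_*)>0$ (from (A2)).
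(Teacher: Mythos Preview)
Your argument is correct and essentially identical to the paper's: both use convexity of $g=\log f$ near infinity to secure a uniform lower bound $g'\ge c>0$ and conclude $g(u)-g(u-\delta)\ge c\delta$, hence $f(u-\delta)\le e^{-c\delta}f(u)$, then invoke $u^*(x)\to\infty$ as $|x|\to 0$ to choose $R_0$. The only cosmetic difference is that the paper writes the gap estimate via the tangent-line inequality $g(u^*-\delta)\le g(u^*)-g'(u^*-\delta)\,\delta$ rather than the mean value theorem.
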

\begin{proof}
We can choose a small $R_0>0$ such that $g(\tau)$ is convex for $\tau>u^*(R_0)-\delta$.
We have
\[
f(u^*(x)-\delta)=e^{g(u^*(x)-\delta)}\le e^{g(u^*(x))-g'(u^*(x)-\delta)\delta}
=f(u^*(x))e^{-g'(u^*(x)-\delta)\delta}%\ \ \textrm{for}\ |x|<R_0.
\]
for $|x|<R_0$.
Since $g'(\tau)\ge C>0$ for $\tau>u^*(R_0)-\delta$, we see that $e^{-g'(u^*(x)-\delta)\delta}\le e^{-C\delta}$.
Let $\gamma_0:=e^{-C\delta}$.
Then the conclusion of the lemma follows.
\end{proof}

\begin{lemma}\label{S3L6}
Suppose that $N\ge 3 $ and that {\rm (A1)}--{\rm (A3)} hold.
Let $u^*$ be the singular solution of equation \eqref{ODE} given in Proposition~{\rm \ref{S2P1}}.
For each $\gamma_1\in (0,1)$, there exists $C>0$ and $R_2>0$ such that
\[
f(\gamma_1u^*(x))\le\frac{C}{|x|^{2\gamma_1}}\ \ \textrm{for}\ |x|<R_2.
\]
\end{lemma}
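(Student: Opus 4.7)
The strategy is to first prove the key inequality $f(\gamma_1 v)F(v)^{\gamma_1}\le C$ for all $v\ge v_0$ and then substitute $v=u^*(x)$ using Proposition~\ref{S1P1}. Since that proposition gives $F(u^*(x))=|x|^2/(2N-4)(1+o(1))$ as $|x|\to 0$, one has $F(u^*(x))^{-\gamma_1}\le C'|x|^{-2\gamma_1}$ on a small ball, and multiplying through yields the claimed bound.

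To establish $f(\gamma_1 v)F(v)^{\gamma_1}\le C$ I would take logarithms and, writing $g=\log f$, show that $g(\gamma_1 v)+\gamma_1\log F(v)$ is bounded above for $v$ large. For the first summand I invoke convexity of $g$ on $[M_0,\infty)$ from (A3): once $\gamma_1 v\ge M_0$, one has the convex-combination identity
\[
\gamma_1 v=\frac{(1-\gamma_1)v}{v-M_0}\,M_0+\frac{\gamma_1 v-M_0}{v-M_0}\,v.
\]
Applying convexity, together with the elementary inequality $(\gamma_1 v-M_0)/(v-M_0)\le\gamma_1$ (which is equivalent to $\gamma_1\le 1$) and boundedness of the other coefficient for $v\ge 2M_0$, gives
\[
g(\gamma_1 v)\le\gamma_1 g(v)+C_1
\]
for all sufficiently large $v$, where $C_1$ depends only on $M_0$, $g(M_0)$, and $\gamma_1$.

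For the second summand I use Lemma~\ref{S3L2}: from $f'(v)F(v)\to 1$, and since $f'=fg'$, taking logarithms gives $\log F(v)=-g(v)-\log g'(v)+o(1)$ as $v\to\infty$. Adding,
\[
g(\gamma_1 v)+\gamma_1\log F(v)\le C_1-\gamma_1\log g'(v)+o(1),
\]
and because $g$ is convex on $[M_0,\infty)$ with $g'>0$ (as $f$ is increasing), $g'$ is nondecreasing there, so $g'(v)\ge g'(M_0)>0$ and $-\gamma_1\log g'(v)$ is bounded above. Exponentiating yields the key inequality, and substituting $v=u^*(x)$ completes the proof.

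The main subtlety is that the cancellation in the displayed sum requires the convexity estimate to produce the coefficient $\gamma_1$ on $g(v)$ \emph{exactly}; any multiplicative excess $\gamma_1+\varepsilon$ would leave a residual term $\varepsilon g(v)\to\infty$ and yield only a weaker bound of the form $|x|^{-2\gamma_1-2\varepsilon}$. This is why the crude estimate $f(u)\le CF(u)^{-1-\delta}$ used in the proof of Lemma~\ref{S3L3} is insufficient here, and why the sharp-constant convexity bound resting on the identity $(\gamma_1 v-M_0)/(v-M_0)\le\gamma_1$ is essential.
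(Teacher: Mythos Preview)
Your proof is correct and follows essentially the same approach as the paper: both derive the convexity bound $g(\gamma_1 v)\le\gamma_1 g(v)+C$, combine it with $f'F\to1$ from Lemma~\ref{S3L2} and the lower bound on $g'$ coming from convexity, and then substitute the asymptotics $F(u^*(x))\sim c|x|^2$. Your direct convex-combination argument on $[M_0,\infty)$ is in fact a bit cleaner than the paper's auxiliary extension $\tilde g$, which as written involves the problematic quantity $g(0)=\log f(0)=-\infty$; your version sidesteps this while using the same ingredients.
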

\begin{proof}
We choose $R_2>0$ such that $g(\tau)$ is convex and $g'(\tau)>0$ for $\tau\ge \gamma_1u^*(R_2)$.
For $\tau\ge 0$, we define a function $\tilde{g}(\tau)$ such that $\tilde{g}(\tau)=g(\tau)-g(0)$ for $\tau>\gamma_1u^*(R_2)$ and  $\tilde{g}(\tau)$ is convex for $\tau\ge 0$.
Since $0<\gamma_1<1$, we have $\tilde{g}(\gamma_1u^*(x))\le\gamma_1\tilde{g}(u^*(x))$ for $|x|<R_2$.
Therefore,
\begin{equation*}
\begin{split}
f(\gamma_1u^*)=e^{g(0)}e^{\tilde{g}(\gamma_1 u^*)}
\le e^{g(0)}e^{\gamma_1\tilde{g}(u^*)}
&\le e^{g(0)}\frac{g'(u^*)^{\gamma_1}}{g'(u^*(R_2))^{\gamma_1}}e^{\gamma_1g(u^*)}\\
&=C\left(f'(u^*)\right)^{\gamma_1}
\le \frac{C}{F(u^*)^{\gamma_1}}\le \frac{C}{|x|^{2\gamma_1}}
\end{split}
\end{equation*}
for $|x|<R_2$,
where the last inequality follows from \eqref{S2P1E3}.
\end{proof}

%%%%%%%%%%%%%%%%%%%%%%%%%%%%%%%%%%%%%%%%%%%%%%%%%%%%%%%
%%%%%%%%%%%%%%%%%%%%%%%%%%%%%%%%%%%%%%%%%%%%%%%%%%%%%%%
%%%%%%%%%%%%%%%%%%%%%%%%%%%%%%%%%%%%%%%%%%%%%%%%%%%%%%%
% Section 4
%%%%%%%%%%%%%%%%%%%%%%%%%%%%%%%%%%%%%%%%%%%%%%%%%%%%%%%
%%%%%%%%%%%%%%%%%%%%%%%%%%%%%%%%%%%%%%%%%%%%%%%%%%%%%%%
%%%%%%%%%%%%%%%%%%%%%%%%%%%%%%%%%%%%%%%%%%%%%%%%%%%%%%%
\section{\bf Singular stationary solution of problem~\eqref{S1E1}}
Later, we use the following:
\begin{proposition}[{\cite[Proposition 2.2]{MT06}}]\label{S4P1}
For $1\le p<\infty$, let $\calL^p_{\ul}(\RN)$ be defined by \eqref{S3E0}.
The following assertions are equivalent:\vspace{3pt}\\
{\rm (i)} $u\in\calL^p_{\ul}(\RN)$;\vspace{3pt}\\
{\rm (ii)} $\displaystyle \lim_{|y|\to 0}\left\|u(\,\cdot\,+y)-u(\,\cdot\,)\right\|_{L^p_{\ul}(\RN)}=0$;\\
{\rm (iii)} $\displaystyle \lim_{t\to 0}\left\| S(t)u-u\right\|_{L^p_{\ul}(\RN)}=0$.
\end{proposition}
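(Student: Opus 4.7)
The plan is to establish the cyclic chain (i) $\Rightarrow$ (ii) $\Rightarrow$ (iii) $\Rightarrow$ (i). For (i) $\Rightarrow$ (ii), I first verify continuity of translation in the $L^p_{\ul}$-norm for $\phi \in BUC(\RN)$: the uniform continuity of $\phi$ combined with the trivial bound $\|\psi\|_{L^p_{\ul}(\RN)} \le |B(0,1)|^{1/p}\|\psi\|_{L^\infty(\RN)}$ gives $\|\phi(\cdot+y)-\phi\|_{L^p_{\ul}(\RN)}\to 0$ as $|y|\to 0$. A standard $3\e$-argument then extends this to arbitrary $u\in\calL^p_{\ul}(\RN)$, exploiting the translation-invariance of $\|\cdot\|_{L^p_{\ul}(\RN)}$ and the definition of $\calL^p_{\ul}(\RN)$ as the closure of $BUC(\RN)$.

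For (ii) $\Rightarrow$ (iii), since $\int_{\RN}G(y,t)\,\di y=1$, I rewrite
\begin{equation*}
[S(t)u](x)-u(x)=\int_{\RN}G(y,t)\bigl(u(x-y)-u(x)\bigr)\,\di y,
\end{equation*}
apply Minkowski's integral inequality on each ball $B(z,1)$, and take the supremum in $z\in\RN$ to obtain
\begin{equation*}
\|S(t)u-u\|_{L^p_{\ul}(\RN)}\le\int_{\RN}G(y,t)\,\|u(\cdot-y)-u\|_{L^p_{\ul}(\RN)}\,\di y.
\end{equation*}
Splitting the right-hand side at $|y|=\delta$, I control the near piece by (ii) applied uniformly in $|y|<\delta$ and the far piece by the uniform bound $\|u(\cdot-y)-u\|_{L^p_{\ul}(\RN)}\le 2\|u\|_{L^p_{\ul}(\RN)}$ together with the Gaussian tail estimate $\int_{|y|\ge\delta}G(y,t)\,\di y\to 0$ as $t\to 0$. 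Choosing $\delta$ first and then $t$ small yields (iii).

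For (iii) $\Rightarrow$ (i), it suffices to show that $S(t)u\in BUC(\RN)$ for each $t>0$, since (iii) then expresses $u$ as the $L^p_{\ul}$-limit of $BUC$-functions and so $u\in\calL^p_{\ul}(\RN)$. Covering $\RN$ by unit cubes $\{z+[0,1)^N\}_{z\in\Z^N}$, I bound
\begin{equation*}
\bigl|[S(t)u](x)\bigr|\le\sum_{z\in\Z^N}\Bigl(\sup_{y\in z+[0,1)^N}G(x-y,t)\Bigr)\|u\|_{L^1(z+[0,1)^N)},
\end{equation*}
and combine the Gaussian decay $\sup_{y\in z+[0,1)^N}G(x-y,t)\le C_t e^{-c|x-z|^2}$ with the inclusion $L^p_{\ul}(\RN)\subset L^1_{\ul}(\RN)$ (valid since $p\ge 1$) to obtain a bound $C_t\|u\|_{L^p_{\ul}(\RN)}$ that is uniform in $x$. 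The same estimate applied to $\nabla_x G(x-y,t)$, which carries only an additional polynomial factor in $|x-y|/t$, bounds $\nabla_x S(t)u$ uniformly in $x$, yielding Lipschitz continuity of $S(t)u$ on $\RN$.

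The main obstacle is the last step, namely verifying that $S(t)$ maps $L^p_{\ul}(\RN)$ into $BUC(\RN)$ with a bound depending only on $t$ and $\|u\|_{L^p_{\ul}(\RN)}$ and not on the base point. This is exactly where the uniformly local structure of the norm is essential: the cube decomposition converts the uniform local integrability of $u$ into a globally convergent series precisely because the Gaussian weight absorbs the lattice sum, and this argument would fail for a merely $L^p_{\loc}$ datum.
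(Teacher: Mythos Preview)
The paper does not supply its own proof of this proposition; it is quoted directly from \cite[Proposition~2.2]{MT06} and used as a black box in the proof of Lemma~\ref{S4L1}. Your argument is a correct and self-contained verification: the $3\e$-argument for (i)$\Rightarrow$(ii), the Minkowski-plus-splitting estimate for (ii)$\Rightarrow$(iii) (where the interchange $\sup_z\int\le\int\sup_z$ is the only point requiring care, and it holds trivially), and the lattice-sum Gaussian bound for (iii)$\Rightarrow$(i) are all standard and sound. Since no proof appears in the present paper there is nothing to compare against here; your approach is presumably close in spirit to the original argument in \cite{MT06}.
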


\begin{lemma}\label{S4L1}
Let $u^*$ be as in Proposition~{\rm \ref{S1P1}}.
Then $u^*$ is a singular stationary solution of problem~\eqref{S1E1} in the sense of Definition~{\rm \ref{S1D1}}.
In particular, 
\[
\infty>u^*(x)=[S(t)u^*](x)+\int_0^t[S(t-s)f(u^*)](x)\, \di s
\]
for all $x\in\RN\setminus\{0\}$ and $t\in(0,\infty)$.
\end{lemma}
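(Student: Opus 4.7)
The identity to prove is
\[
u^*(x) = [S(t)u^*](x) + \int_0^t [S(t-s) f(u^*)](x) \, \di s, \quad x \in \R^N \setminus \{0\},\ t > 0,
\]
with all three terms finite. My plan is to derive it by convolving the stationary equation with the heat kernel, via the intermediate semigroup ODE $\frac{d}{dt}[S(t)u^*](x) = -[S(t)f(u^*)](x)$. As a preliminary step, I would show that $-\Delta u^* = f(u^*)$ in $\mathcal{D}'(\R^N)$. Since $u^*$ is a classical solution of $\Delta u^* + f(u^*) = 0$ outside the origin, Green's identity on $\R^N \setminus B(0,\rho)$ reduces this to showing that the boundary contribution on $\partial B(0,\rho)$ vanishes as $\rho \to 0$; by Lemma~\ref{S3L3}, choosing $\delta > 0$ with $2\delta < N-2$, that contribution is bounded by $C\rho^{N-1-2\delta}\|\nabla\phi\|_\infty + C\rho^{N-2-2\delta}\|\phi\|_\infty$, so the limit is immediate. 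The local integrability of $f(u^*)$ near the origin follows from \eqref{S2P1E2} combined with Lemma~\ref{S3L3}.

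Next, fix $x \neq 0$ and set $h(t) := [S(t)u^*](x)$. Differentiation under the integral, using $\partial_t G(x-y,t) = \Delta_y G(x-y,t)$ and dominated convergence (valid because $u^* \in \calL^1_{\ul}(\R^N)$ by Proposition~\ref{S1P1}), gives $h'(t) = \int_{\R^N} \Delta_y G(x-y,t)\, u^*(y)\, \di y$. To apply the distributional equation here, I must approximate the non-compactly supported test function $G(x-\cdot,t)$: take $\eta_R \in C_c^\infty(\R^N)$ with $\eta_R \equiv 1$ on $B(0,R)$ and $|\nabla\eta_R| + |\Delta\eta_R| \le C/R$, apply the distributional identity to $\phi_R := G(x-\cdot,t)\,\eta_R$, and pass $R \to \infty$. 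Expanding $\Delta\phi_R$ by the product rule, the cross-terms are supported on $\{R < |y| < 2R\}$, where the Gaussian decay of $G(x-\cdot,t)$ dominates both the boundedness of $u^*$ (decreasing, and hence bounded for $|y|$ large, by \eqref{S2P1E2}) and that of $f(u^*)$, so these cross-terms vanish in the limit; what remains yields $h'(t) = -[S(t) f(u^*)](x)$. Integrating from $\tau > 0$ to $t$ and letting $\tau \to 0^+$ produces the identity, once we verify that $[S(\tau)u^*](x) \to u^*(x)$ at the continuity point $x \neq 0$; this follows by splitting $u^* = u^*\chi_{B(x,r)} + u^*\chi_{\R^N \setminus B(x,r)}$ for small $r$ and applying pointwise heat-semigroup convergence to each piece. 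A change of variable $s \mapsto t-s$ then converts $\int_0^t [S(s)f(u^*)](x)\,\di s$ into $\int_0^t [S(t-s)f(u^*)](x)\,\di s$, and the finiteness of each term is an immediate byproduct of the integrability bounds used above.

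I expect the main obstacle to be the cutoff argument: controlling the cross-terms uniformly in $R$ via the balance between the Gaussian tail of $G(x-\cdot,t)$ and the global behavior of $u^*$ and $f(u^*)$, and confirming that $f(u^*)$ has enough regularity (locally integrable near the origin by \eqref{S2P1E2} and Lemma~\ref{S3L3}, bounded away from it by monotonicity of $u^*$) for $[S(t)f(u^*)](x)$ to be finite and continuous in $t$. Once this is secured, the semigroup ODE $h'(t) = -[S(t)f(u^*)](x)$ and the pointwise initial-value limit are routine, and the target identity follows by straightforward integration.
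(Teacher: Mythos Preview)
Your argument is correct. The paper takes a more direct route: it excises the ball $B(0,\varepsilon)$ and applies Green's identity to $\int G(x-y,t-s)\Delta u^*(y)\,\di y$ on $B(0,\varepsilon)^c$, showing the boundary terms on $\partial B(0,\varepsilon)$ vanish via Lemma~\ref{S3L3}; after integrating in $s\in(0,t-\delta)$ and using $\Delta_yG=-\partial_sG$, it sends $\varepsilon\to 0$ and then $\delta\to 0$, the latter via the $\calL^1_{\ul}$ semigroup continuity of Proposition~\ref{S4P1} to recover $u^*$ from $S(\delta)u^*$. You instead first isolate the distributional identity $-\Delta u^*=f(u^*)$ on all of $\R^N$, then derive the pointwise ODE $\frac{d}{dt}[S(t)u^*](x)=-[S(t)f(u^*)](x)$ by approximating the Gaussian test function with a cutoff $\eta_R$ at spatial infinity, and finally integrate using the elementary pointwise convergence $[S(\tau)u^*](x)\to u^*(x)$ at the continuity point $x\neq 0$. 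Your version avoids the $\calL^1_{\ul}$ machinery at the cost of the extra $R\to\infty$ cutoff and gives a cleaner separation between the elliptic and parabolic ingredients; the paper's computation is a bit shorter and needs no explicit cutoff at infinity (its Green's identity on the unbounded set $B(0,\varepsilon)^c$ tacitly relies on the same Gaussian decay you spell out).
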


\begin{proof}
Let $t\in(0,\infty)$.
We show that $u(x,t)=u^*(x)$ is a solution in the sense of Definition~\ref{S1D1}.
Since $u^*(x)\in C^2(\RN\setminus\{0\})$, we see by Green's identity that, for $s\in (0,t)$,
\begin{equation}
\label{S4L1E1}
\begin{split}
&\int_{B(0,\e)^c}G(x-y,t-s)\Delta u^*(y) \, \di y\\
& =\int_{\partial B(0,\e)^c}\left(
G(x-y,t-s)\frac{\partial}{\partial\nu_y}u^*(y)-u^*(y)\frac{\partial}{\partial\nu_y}G(x-y,t-s)\right) \,\di \sigma(y)\\
&\qquad+\int_{B(0,\e)^c}\Delta_yG(x-y,t-s)u^*(y) \,\di y\\
&=:I_1+I_2.
\end{split}
\end{equation}
By Lemma~\ref{S3L3} we have
\begin{align}\label{S4L1E2}
|I_1|
&\le C\left|\left.\frac{\partial}{\partial\nu}u^*(y)\right|_{|y|=\e}\right|\e^{N-1}
+C\left|\left.u^*(y)\right|_{|y|=\e}\right|\e^{N-1}\nonumber\\
&\le C\e^{\frac{N}{2}-1-2\delta}+C\e^{\frac{N}{2}-2\delta}
\to 0\ \ \textrm{as}\ \ \e\to 0,
\end{align}
since ${N/2}-1-2\delta>0$ and ${N/2}-2\delta>0$ for small $\delta>0$.
Integrating (\ref{S4L1E1}) with respect to $s$ over $(0,t-\delta)$ , we have
\begin{equation*}
\begin{split}
&\int_0^{t-\delta}\int_{B(0,\e)^c}G(x-y,t-s)\Delta u^*(y) \, \di y\di s\\
&\qquad=\int_0^{t-\delta}I_1\,\di s-\int_0^{t-\delta}\int_{B(0,\e)^c}\partial_sG(x-y,t-s)u^*(y)\,\di y\di s\\
&\qquad=\int_0^{t-\delta}I_1\,\di s-\int_{B(0,\e)^c}\int_0^{t-\delta}\partial_sG(x-y,t-s)u^*(y)\,\di s\di y\\
&\qquad=\int_0^{t-\delta}I_1\,\di s-\int_{B(0,\e)^c}\left(G(x-y,\delta)u^*(y)-G(x-y,t)u^*(y)\right)\,\di y,
\end{split}
\end{equation*}
where we used $\Delta_yG(x-y,t-s)=-\partial_sG(x-y,t-s)$.
Letting $\e\to 0$, by the dominated convergence theorem and (\ref{S4L1E2}) we have
\begin{equation}\label{S4L1E3}
\lim_{\e\to 0}\int_0^{t-\delta}\int_{B(0,\e)^c}G(x-y,t-s)\Delta u^*(y) \, \di y\di s
=-S(\delta)u^*+S(t)u^*.
\end{equation}
By the monotone convergence theorem we have
\begin{equation}
\label{S4L1E4}
\begin{split}
&\lim_{\e\to 0}\int_0^{t-\delta}\int_{B(0,\e)^c}G(x-y,t-s)f(u^*(y))\,\di y\di s\\
&\qquad=\lim_{\e\to 0}\int_0^{t-\delta}\int_{\RN}G(x-y,t-s)f(u^*(y))\chi_{B(0,\e)^c}(y)\,\di y\di s\\
&\qquad=\int_0^{t-\delta}\int_{\RN}G(x-y,t-s)f(u^*(y))\,\di y\di s.
\end{split}
\end{equation}
The solution $u^*$ satisfies $\Delta u^*+f(u^*)=0$ in the classical sense for $x\in B(0,\e)^c$.
Hence,
\begin{equation*}
\begin{split}
&\int_0^{t-\delta}\int_{B(0,\e)^c}G(x-y,t-s)\Delta u(y)\,\di y\di s\\
&\qquad\qquad+\int_0^{t-\delta}\int_{B(0,\e)^c}G(x-y,t-s)f(u^*(y))\,\di y\di s=0.
\end{split}
\end{equation*}
Letting $\e\to 0$, by (\ref{S4L1E3}) and (\ref{S4L1E4}) we have
\begin{equation}\label{S4L1E5}
-S(\delta)u^*+S(t)u^*+\int_0^{t-\delta}S(t-s)f(u^*) \, \di s=0.
\end{equation}
Since $u^*\in\calL^1_{\ul}(\RN)$  (by Lemma~\ref{S3L4}), by Proposition~\ref{S4P1} with $p=1$ we have
\begin{equation}\label{S4L1E6}
\lim_{\delta\to 0}\left\|S(\delta)u^*-u^*\right\|_{L^1_{\ul}(\RN)}=0.
\end{equation}
By the monotone convergence theorem we have
\begin{equation}
\label{S4L1E7}
\begin{split}
\lim_{\delta\to 0}\int_0^{t-\delta}S(t-s)f(u^*)\,\di s
&=\lim_{\delta\to 0}\int_0^t\left(S(t-s)f(u^*)\right)\chi_{(0,t-\delta)}(s)\, \di s\\
&=\int_0^tS(t-s)f(u^*)\,\di s.
\end{split}
\end{equation}
By \eqref{S4L1E7}, \eqref{S4L1E6}, and \eqref{S4L1E5} we see that as $\delta\to 0$,
\[
u^*=S(t)u^*+\int_0^tS(t-s)f(u^*)\,\di s
\]
for all $(x,t)\in(\R^N\setminus\{0\})\times (0,\infty)$.
Thus, $u^*$ is a solution in the sense of Definition~\ref{S1D1}.
The proof is complete.
\end{proof}
Theorem~\ref{S1T1}~(ii) immediately follows from Lemma~\ref{S4L1}.

%%%%%%%%%%%%%%%%%%%%%%%%%%%%%%%%%%%%%%%%%%%%%%%%%%%%%%%
%%%%%%%%%%%%%%%%%%%%%%%%%%%%%%%%%%%%%%%%%%%%%%%%%%%%%%%
%%%%%%%%%%%%%%%%%%%%%%%%%%%%%%%%%%%%%%%%%%%%%%%%%%%%%%%
% Section 5
%%%%%%%%%%%%%%%%%%%%%%%%%%%%%%%%%%%%%%%%%%%%%%%%%%%%%%%
%%%%%%%%%%%%%%%%%%%%%%%%%%%%%%%%%%%%%%%%%%%%%%%%%%%%%%%
%%%%%%%%%%%%%%%%%%%%%%%%%%%%%%%%%%%%%%%%%%%%%%%%%%%%%%%
%\newpage
\section{\bf Existence of a solution of problem~\eqref{S1E1}}
\begin{definition}\label{S5D1}
{\rm (i)} We call $v(t)$ a minimal solution of problem~\eqref{S1E1} if $v(t)$ is a solution of problem~\eqref{S1E1} in the sense of Definition~{\rm\ref{S1D1}} such that for any solution $u(t)$, $0\le u\le u^*$ a.e.~in $\RN\times [0,T)$, in the sense of Definition~{\rm\ref{S1D1}}, $v\le u$ a.e.~in $\RN\times [0,T)$.\\
{\rm (ii)} Let $w(t)\le u^*$.
We call $w(t)$ a maximal solution of problem~\eqref{S1E1} if $w(t)$ is a solution of problem~\eqref{S1E1} in the sense of Definition~{\rm\ref{S1D1}} such that for any solution $u(t)$, $0\le u\le u^*$ a.e.~in $\RN\times [0,T)$, in the sense of Definition~{\rm\ref{S1D1}}, $w\ge u$ a.e.~in $\RN\times [0,T)$.
\end{definition}

We construct two functions satisfying \eqref{S1D1E2}, which will become the minimal and maximal solutions.
\begin{lemma}\label{S5L0}
Suppose that $0\le u_0\le u^*$ a.e.~in $\mathbb{R}^N$.
Then there exists nonnegative measurable functions $v(t)$ and $w(t)$ on $\mathbb{R}^N\times[0,\infty)$ such that \eqref{S1D1E2} holds for a.a.~$(x,t)\in\R^N\times[0,\infty)$ and that $v\le w\le u^*$ a.e.~in $\RN\times[0,\infty)$.
\end{lemma}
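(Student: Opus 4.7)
The plan is to construct $v$ and $w$ via a classical monotone iteration, using Lemma~\ref{S4L1} to control the upper chain and trap the scheme between $0$ and $u^*$.

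Let $\mathcal{T}$ denote the integral operator
\[
\mathcal{T}[\phi](x,t) := [S(t) u_0](x) + \int_0^t [S(t-s) f(\phi(\cdot,s))](x)\, \di s
\]
acting on nonnegative measurable functions on $\R^N\times[0,\infty)$. I would set $v_0\equiv 0$, $w_0(x,t):=u^*(x)$, and iterate $v_{n+1}:=\mathcal{T}[v_n]$, $w_{n+1}:=\mathcal{T}[w_n]$. Since $f$ is nondecreasing and $S(t)$ is a positive operator, $\mathcal{T}$ is order-preserving, so the inductive step is automatic and only the base cases require input. One has $v_1=S(t)u_0\ge 0=v_0$ trivially. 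For the upper chain, Lemma~\ref{S4L1} furnishes the identity $u^*(x)=[S(t)u^*](x)+\int_0^t [S(t-s)f(u^*)](x)\,\di s$, so the hypothesis $u_0\le u^*$ combined with positivity of $S(t)$ gives
\[
w_1(x,t)=[S(t)u_0](x)+\int_0^t [S(t-s)f(u^*)](x)\,\di s\le u^*(x)=w_0(x,t).
\]
Induction then produces the nested chain $0=v_0\le v_1\le\cdots\le v_n\le w_n\le\cdots\le w_1\le w_0=u^*$ a.e.

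Next I would define $v:=\sup_n v_n$ and $w:=\inf_n w_n$, both nonnegative measurable and satisfying $v\le w\le u^*$ a.e. To pass to the limit in $v_{n+1}=\mathcal{T}[v_n]$ I would invoke monotone convergence: continuity and monotonicity of $f$ give $f(v_n)\uparrow f(v)$, which commutes with the positive convolution against $G$ and the positive integration in $s$. For $w_{n+1}=\mathcal{T}[w_n]$, the analogous passage uses dominated convergence with majorant $f(u^*)$. The outcome is that both $v$ and $w$ satisfy \eqref{S1D1E2} pointwise a.e. Finiteness of the integral terms is guaranteed by the pointwise sandwich $\mathcal{T}[v]\le\mathcal{T}[w]\le\mathcal{T}[u^*]\le u^*$, which is finite for every $x\ne 0$ and hence a.e.\ in $\R^N$.

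The only genuine technical point — and the main obstacle — is ensuring that the iteration is well posed, i.e., that $[S(t-s)f(w_n)](x)$ is a.e.\ finite at each stage so that the Fubini--Tonelli exchanges used above are legitimate. This reduces to $f(u^*)\in L^1_{\loc}(\R^N)$, which follows from estimate~\eqref{S3L3E2} in the proof of Lemma~\ref{S3L3}: for small $\delta>0$, $f(u^*(r))\le Cr^{-2-2\delta}$ near the origin, hence $\int_{B(0,R)}f(u^*)\,\di x<\infty$. Combined with Lemma~\ref{S4L1}, which already asserts that $\int_0^t S(t-s)f(u^*)\,\di s$ is finite a.e., every iterate remains a.e.\ finite and the monotone scheme closes cleanly to produce the required $v$ and $w$.
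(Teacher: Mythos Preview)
Your proposal is correct and follows essentially the same monotone iteration scheme as the paper: identical choice of $v_0=0$, $w_0=u^*$, the same use of Lemma~\ref{S4L1} to bound $w_1\le u^*$, and the same inductive sandwich \eqref{S5L0E1} followed by passage to the limit. If anything, your write-up is slightly more careful than the paper's, distinguishing monotone convergence for the increasing chain $v_n$ from dominated convergence (majorant $f(u^*)$) for the decreasing chain $w_n$, and making explicit the local integrability of $f(u^*)$ that keeps the iterates finite.
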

\begin{proof}
We define
\begin{equation}\label{wk}
w_0=u^*,\quad
w_k(t)=S(t)u_0+\int_0^tS(t-s)f(w_{k-1}(s))\,\di s,
\end{equation}
and
\begin{equation}\label{vk}
v_0=0,\quad
v_k(t)=S(t)u_0+\int_0^tS(t-s)f(v_{k-1}(s))\,\di s,
\end{equation}
for $k=1,2,\cdots$.
We see from  Lemma~\ref{S4L1} that $v_0\le v_1\le w_1\le w_0$.
By induction we can easily deduce
\begin{equation}
\label{S5L0E1}
0=v_0\le v_1\le v_2\le\cdots\le w_2\le w_1\le w_0=u^*<\infty
\end{equation}
for a.a.~$(x,t)\in\R^N\times [0,\infty)$.
For each $t>0$, the limits $\lim_{k\to\infty} v_k$ and $\lim_{k\to\infty} w_k$ exist for a.a.~$(x,t)\in\RN\times [0,\infty)$.
Thus, we define
\[
v(t):=\lim_{k\to\infty} v_k(t),\quad w(t):=\lim_{k\to\infty} w_k(t),
\]
for a.a.~$(x,t)\in\RN\times[0,\infty)$.
We apply the monotone convergence theorem to \eqref{wk}.
Then we obtain
\[
\infty>w(t)=S(t)u_0+\int_0^tS(t-s)f(w(s))\,\di s
\]
for a.a.~$(x,t)\in\RN\times [0,\infty)$.
and hence $w$ satisfies \eqref{S1D1E2} for a.a.~$(x,t)\in \R\times [0,\infty)$.
Thus, the proof is complete.
\end{proof}

\begin{lemma}\label{S5L0+}
Let $v(t)$ and $w(t)$ be functions constructed in Lemma~{\rm\ref{S5L0}}.
Then, $v(t)$ and $w(t)$ are the minimal and maximal solutions in the sense of Definition~{\rm\ref{S5D1}}, respectively.
\end{lemma}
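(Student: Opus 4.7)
The plan is to establish both the minimality of $v$ and the maximality of $w$ by a standard monotone iteration argument that compares the iterates $v_k$ and $w_k$ from \eqref{vk} and \eqref{wk} against an arbitrary solution $u$ satisfying $0 \le u \le u^*$. The key ingredient is the monotonicity of $f$ on $[0,\infty)$ (from assumption (A2)) together with the positivity of the heat kernel $G$, which makes the map $\phi \mapsto S(t)u_0 + \int_0^t S(t-s)f(\phi(s))\,\di s$ monotone increasing on nonnegative functions.

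First I would fix an arbitrary solution $u$ of \eqref{S1E1} in the sense of Definition~\ref{S1D1} with $0 \le u \le u^*$ a.e.~in $\RN \times [0,T)$. I claim that
\[
v_k(t) \le u(t) \le w_k(t) \quad \text{a.e.~in } \RN \times [0,T),
\]
for every $k \ge 0$, and I prove this by induction on $k$. For $k=0$, the base case is immediate since $v_0 = 0 \le u$ (as $u \ge 0$) and $u \le u^* = w_0$ by hypothesis. For the inductive step, assuming $v_{k-1} \le u \le w_{k-1}$ a.e., the monotonicity of $f$ on $[0,\infty)$ gives $f(v_{k-1}) \le f(u) \le f(w_{k-1})$ a.e., and convolving against the nonnegative kernel $G(x-y,t-s)$ and using that $u$ itself satisfies the integral equation in Definition~\ref{S1D1} yields
\[
v_k(t) = S(t)u_0 + \int_0^t S(t-s)f(v_{k-1}(s))\,\di s \le S(t)u_0 + \int_0^t S(t-s)f(u(s))\,\di s = u(t),
\]
and similarly $u(t) \le w_k(t)$ a.e.

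Passing to the limit $k \to \infty$ using the pointwise monotone limits already identified in \eqref{S5L0E1} gives $v(t) \le u(t) \le w(t)$ a.e.~in $\RN \times [0,T)$. Combined with the fact that $v$ and $w$ themselves were shown in Lemma~\ref{S5L0} to satisfy \eqref{S1D1E2} with $0 \le v \le w \le u^*$, this establishes both that $v$ is minimal and that $w$ is maximal in the sense of Definition~\ref{S5D1}.

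I do not expect any serious obstacle: the only technical point to keep track of is that assumption (A2) guarantees $f$ is increasing on $[0,\infty)$, and since all iterates $v_k$, $w_k$ as well as any candidate solution $u$ are nonnegative and bounded above by $u^*$, the convention $f(u)=0$ for $u<0$ plays no role and the monotonicity of $f$ can be applied at every step. Finiteness of all integrals involved has already been secured in Lemma~\ref{S5L0} through the domination by $u^*$ and Lemma~\ref{S4L1}.
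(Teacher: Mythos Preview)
Your proof is correct and follows essentially the same approach as the paper: an induction on $k$ showing $v_k \le u \le w_k$ via the monotonicity of $f$ and positivity of the heat kernel, followed by passage to the limit. The paper's own proof is slightly terser (it writes out only the minimal-solution side and says the maximal case is similar), but the argument is identical.
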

\begin{proof}
Let $u(t)$ be a nonegative solution of problem~\eqref{S1E1} in the sense of Definition~\ref{S1D1} such that $0\le u(t)\le u^*$ a.e.~in $\RN\times[0,T)$. Then,
\[
u(t)=S(t)+\int_0^tS(t-s)f(u(s)) \di s.
\]
Since $v_0=0\le u(t)$ a.e.~in $\R^N\times [0,T)$, by induction we see that for each $k\ge 1$,
\[
u(t)-v_k(t)=\int_0^tS(t-s)\left\{f(u(s))-f(v_{k-1}(s))\right\}\di s\ge 0
\]
a.e.~in $\R^N\times [0,T)$.
Since $v(t)=\lim_{k\to\infty}v_k(t)\le u(t)$, $u(t)$ is a minimal solution of \eqref{S1E1}.
The proof of the maximal solution is similar.
We omit the details.
\end{proof}

We use the following proposition in the proof of Lemma~\ref{S5L1} below:
\begin{proposition}[{\cite[Corollary 3.1]{MT06}}]\label{S5P1}
Let $1\le p\le q\le\infty$.
Then there exists $C_1=C_1(N,p,q)>0$ such that
\[
\left\|S(t)u\right\|_{L^q_{\ul}(\R^N)}\le C_1\left(t^{-\frac{N}{2}\left(\frac{1}{p}-\frac{1}{q}\right)}+1\right)\left\|u\right\|_{L^p_{\ul}(\R^N)}
\]
for $t>0$ and $u\in L^p_{\ul}(\R^N)$.
\end{proposition}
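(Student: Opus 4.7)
The strategy is to split the heat kernel into a compactly supported local part and a Gaussian tail, $G(\cdot,t)=G_1+G_2$ with $G_1:=G(\cdot,t)\chi_{B(0,1)}$ and $G_2:=G(\cdot,t)\chi_{B(0,1)^c}$, so that $S(t)u=G_1*u+G_2*u$, and then bound each convolution separately in $L^q_{\ul}(\R^N)$.

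For the local part, fix $z\in\R^N$. Since $G_1$ is supported in $B(0,1)$, the value of $(G_1*u)(x)$ for $x\in B(z,1)$ depends on $u$ only through its restriction to $B(z,2)$. The classical Young convolution inequality with $1+1/q=1/p+1/r$ then yields
\[
\|G_1*u\|_{L^q(B(z,1))}\le \|G_1\|_{L^r(\R^N)}\,\|u\|_{L^p(B(z,2))}\le C(N)\,\|G(\cdot,t)\|_{L^r(\R^N)}\,\|u\|_{L^p_{\ul}(\R^N)},
\]
where the last step uses that $B(z,2)$ is covered by finitely many unit balls. A direct Gaussian computation gives $\|G(\cdot,t)\|_{L^r(\R^N)}\le C\,t^{-\frac{N}{2}(1/p-1/q)}$, producing the first summand on the right-hand side of the claim after taking $\sup_z$.

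For the tail part, decompose $B(0,1)^c=\bigcup_{k\ge 0}A_k$ with $A_k:=B(0,2^{k+1})\setminus B(0,2^k)$ and use the pointwise estimate $G(y,t)\le (4\pi t)^{-N/2}e^{-4^k/(4t)}$ for $y\in A_k$. Combined with H\"older's inequality on each shell and the fact that the translated shell $x-A_k$ is covered by at most $C\,2^{kN}$ unit balls (so $\|u\|_{L^p(x-A_k)}\le C\,2^{kN/p}\|u\|_{L^p_{\ul}(\R^N)}$), this yields
\[
\left|\int_{A_k}G(y,t)\,u(x-y)\,\di y\right|\le C\,t^{-N/2}\,2^{kN}\,e^{-4^k/(4t)}\,\|u\|_{L^p_{\ul}(\R^N)}.
\]
It then suffices to show that $\Sigma(t):=t^{-N/2}\sum_{k\ge 0}2^{kN}e^{-4^k/(4t)}$ is uniformly bounded in $t>0$. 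For $t\le 1$, the $k=0$ term dominates and $t^{-N/2}e^{-1/(4t)}$ is bounded as $t\to 0^+$; for $t\ge 1$, split at $k^*$ with $4^{k^*}\approx t$: the low-$k$ partial sum is geometric with ratio $2^N$ and bounded by $C\,2^{k^*N}\le C\,t^{N/2}$, while the high-$k$ tail is $O(1)$ by super-exponential Gaussian decay. Hence $\Sigma(t)\le C$ throughout, producing the ``$+1$'' summand.

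The main obstacle is the uniform bound on $\Sigma(t)$, which requires careful balancing of the two regimes $t\le 1$ and $t\ge 1$ and the bookkeeping of covering constants for translated annuli. Adding the two estimates and taking $\sup_z$ produces the desired inequality with a constant $C_1=C_1(N,p,q)$.
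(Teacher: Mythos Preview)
Your argument is correct. The paper does not give its own proof of this proposition; it is simply quoted from \cite[Corollary~3.1]{MT06}, so there is no in-paper argument to compare against. Your kernel splitting $G=G\chi_{B(0,1)}+G\chi_{B(0,1)^c}$, with Young's inequality on the local piece and a dyadic-annulus estimate on the tail, is the standard direct route to this $L^p_{\ul}\to L^q_{\ul}$ smoothing estimate and recovers exactly the $t^{-\frac{N}{2}(1/p-1/q)}+1$ shape.

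One spot deserves a slightly fuller sentence: in the $t\le 1$ case for $\Sigma(t)$, ``the $k=0$ term dominates'' is not by itself an argument, since every term carries the divergent factor $t^{-N/2}$. The clean way is to write $e^{-4^k/(4t)}\le e^{-1/(8t)}\,e^{-4^k/8}$ for $k\ge 0$ and $t\le 1$, so that
\[
\Sigma(t)\le\Bigl(t^{-N/2}e^{-1/(8t)}\Bigr)\sum_{k\ge 0}2^{kN}e^{-4^k/8},
\]
which is uniformly bounded on $(0,1]$. With that line inserted, the proof is complete.
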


\begin{lemma}\label{S5L1}
Suppose that $0\le u_0\le u^*$ a.e.~in $\R^N$ and $u_0\not\equiv u^*$ a.e.~in $\mathbb{R}^N$ (and hence $u_0<u^*$ on a set of positive measure in $\R^N$).
Then the maximal solution $w$ of problem~\eqref{S1E1} constructed in Lemma~{\rm\ref{S5L0}} is a solution in $\mathbb{R}^N\times[0,\infty)$ in the sense of Definition~{\rm \ref{S1D1}} with $T=\infty$ satisfying $w\in L_{loc}^{\infty}(0,\infty;L^{\infty}(\RN))$.
Therefore, $w$ satisfies the equation in \eqref{S1E1} in $\RN\times (0,\infty)$ in the classical sense.
\end{lemma}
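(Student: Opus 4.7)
The plan is to translate the strict ordering $u_0\not\equiv u^*$ into a quantitative improvement of the trivial bound $w\le u^*$ that places $f(w)$ in $L^p_{\ul}(\RN)$ for some $p>N/2$; the smoothing estimate of Proposition~\ref{S5P1} then yields the desired $L^\infty$ bound on $w(t)$, after which parabolic regularity upgrades it to a classical solution.

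First I would extract a strict gap. From the monotone iteration of Lemma~\ref{S5L0}, a simple induction gives
\[
u^*-w_k\;=\;S(t)(u^*-u_0)+\int_0^t S(t-s)\bigl[f(u^*)-f(w_{k-1})\bigr]\,\di s\;\ge\; S(t)(u^*-u_0),
\]
so in the limit $u^*-w\ge S(t)(u^*-u_0)$ for a.a.~$(x,t)$. Because $u_0\le u^*$ and $u_0\not\equiv u^*$, the function $u^*-u_0$ is nonnegative and not a.e.~zero, so $S(t)(u^*-u_0)$ is strictly positive and continuous for $t>0$. Hence, for any $0<t_0<T$ and any $R_0>0$ there exists $\eta_0>0$ with
\[
w(t,x)\le u^*(x)-\eta_0\qquad\text{for all }|x|\le R_0,\ t\in[t_0,T].
\]

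Next I would bootstrap this additive gap into a multiplicative one. On $\{|x|<R_0\}$, Lemma~\ref{S3L5} converts $u^*-\eta_0$ into $f(w)\le \gamma_0 f(u^*)$ with some $\gamma_0\in(0,1)$. Substituting this into the Duhamel formula for $w$, together with the identity $\int_0^t S(t-s)f(u^*)\,\di s=u^*-S(t)u^*$ coming from Lemma~\ref{S4L1}, yields an improved bound of the form $w(t)\le \gamma_0 u^*+C(t)$ with $C(t)$ bounded on compact subintervals of $(0,\infty)$. Combining this with Lemma~\ref{S3L6}, which provides $f(\gamma_0 u^*)\le C|x|^{-2\gamma_0}$ for $|x|<R_2$, and using hypothesis (A3) to absorb the additive constant when passing from $f(\gamma_0 u^*+C)$ to a multiple of $f(\gamma_0 u^*)$, shows that $\|f(w(s))\|_{L^p_{\ul}(\RN)}$ is uniformly bounded on $s\in[t_0,T]$ for some $p>N/2$. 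This is the integrability condition alluded to in the strategy discussion.

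Plugging this into Proposition~\ref{S5P1} gives
\[
\|w(t)\|_{L^\infty(\RN)}\le C(t^{-N/2}+1)\|u_0\|_{L^1_{\ul}(\RN)}+\int_0^t C\bigl((t-s)^{-N/(2p)}+1\bigr)\|f(w(s))\|_{L^p_{\ul}(\RN)}\,\di s,
\]
which is finite for $t\in[t_0,T]$ because $N/(2p)<1$. Since $t_0,T$ were arbitrary, $w\in L^\infty_{\loc}(0,\infty;L^\infty(\RN))$, and the classical parabolic regularity applied to the bounded nonlinearity $f(w)$ promotes $w$ to a solution of \eqref{S1E1} in the classical sense on $\RN\times(0,\infty)$.

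The main obstacle is the bootstrap step. The additive gap $S(t)(u^*-u_0)$ is a globally bounded quantity, whereas $u^*$ is unbounded near the origin, so one cannot immediately deduce $w\le\gamma u^*$ with $\gamma<1$ near $0$ by pointwise comparison. The self-improvement must instead go through the Duhamel identity of Lemma~\ref{S4L1}, and the constant shift $C$ that the Duhamel formula unavoidably introduces inside $f(\gamma_0 u^*+C)$ is where the assumptions (A3) and Lemma~\ref{S3L6} become essential: they are precisely what is needed to control $f(\gamma_0 u^*+C)$ by a multiple of $|x|^{-2\gamma_0}$ with $2\gamma_0<2$, bringing the exponent into the range where $L^p_{\ul}$-integrability holds for some $p>N/2$.
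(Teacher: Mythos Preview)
Your overall strategy matches the paper's: produce the additive gap $u^*-w\ge S(t)(u^*-u_0)$, convert it via Lemma~\ref{S3L5} to $f(w)\le\gamma_0 f(u^*)$ near the origin, feed this back through Duhamel and Lemma~\ref{S4L1} to obtain $w\le\gamma_0 u^*+C$, then invoke Lemma~\ref{S3L6} to put $f(w)$ into $L^p_{\ul}$ with $p>N/2$, and close with Proposition~\ref{S5P1}. The paper runs these steps along the iterates $w_1,\dots,w_4$ rather than on $w$ directly, which keeps the dependencies linear but is not an essential difference.

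Two corrections are needed, however. First, your claim that (A3) lets you absorb the additive constant so that $f(\gamma_0 u^*+C)$ is a constant multiple of $f(\gamma_0 u^*)$ is false in general: since $g=\log f$ is convex, $f(\gamma_0 u^*+C)/f(\gamma_0 u^*)\ge e^{g'(\gamma_0 u^*)C}$, and nothing in (A3) bounds $g'$ (in Example~1 one has $g'(u)\sim qu^{q-1}\to\infty$, so the ratio blows up as $|x|\to0$). The correct move, which the paper makes, is to absorb the constant into the \emph{argument} instead: because $u^*(x)\to\infty$ as $|x|\to0$, for any $\gamma_1\in(\gamma_0,1)$ one has $\gamma_0 u^*+C\le\gamma_1 u^*$ on a small ball, and then Lemma~\ref{S3L6} applied with $\gamma_1$ gives $f(w)\le f(\gamma_1 u^*)\le C|x|^{-2\gamma_1}$, which is precisely what yields $L^{N/2+\e}_{\ul}$-integrability. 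Second, your displayed smoothing inequality integrates $\|f(w(s))\|_{L^p_{\ul}}$ from $s=0$, but you only control this quantity for $s\ge t_0$; the paper handles this by restarting via the semigroup identity $w(t)=S(t-t_0)w(t_0)+\int_{t_0}^t S(t-s)f(w(s))\,\di s$ and bounding $S(t-t_0)w(t_0)$ through $w(t_0)\le u^*\in L^1_{\ul}$.
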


\begin{proof}
In Lemma~\ref{S5L0+} we have already shown that $w$ is the maximal solution.
All we have to do is to prove that $w\in L^{\infty}_{loc}(0,\infty;L^{\infty}(\RN))$.
We choose $T>0$ arbitrarily.

We divide the proof into five steps.\\
{\it Step 1:} We show that
\begin{equation}\label{S5L1E0}
w_1(x,t)=u^*(x)-\eta (x,t),
\end{equation}
where $\eta(x,t)$ satisfies
\begin{equation}\label{S5L1E1}
\eta(x,t)=[S(t)(u^*-u_0)](x)\ \ \textrm{a.e.~in}\ \RN\ \textrm{for all}\ 0\le t<T.
\end{equation}
By Lemma~\ref{S4L1},
\begin{equation}\label{S5L1E2}
u^*=S(t)u^*+\int_0^tS(t-s)f(u^*)\, \di s.
\end{equation}
Subtracting (\ref{S5L1E1}) from (\ref{S5L1E2}), we have
\[
u^*-\eta(t)=S(t)\left[u^*-(u^*-u_0)\right]+\int_0^tS(t-s)f(u^*)\, \di s=w_1,
\]
where the last equality follows from (\ref{wk}) with $k=1$.

\noindent
{\it Step 2:} We show that, for each $t_0\in(0,T/2)$, there exist $R_0>0$ and $\delta>0$ such that
\[
w_2(x,t)\le u^*(x)-\delta\ \textrm{a.e.~in $B(0,R_0)$ for all $t_0\le t<T$.}
\]

It follows from (\ref{S5L1E1}) that $\eta\ge 0$ a.e.~in $\R^N$ for $t\ge 0$.
Because of (\ref{S5L1E0}), we see that $w_1(t)\le u^*$ a.e.~in $\R^N$ for $0\le t<T$.
Then, it is obvious that $w_1$ is a supersolution of \eqref{wk} with $k=2$.
Hence, $0\le w_2(t)\le w_1(t)$ a.e.~in $\RN$ for $0\le t<T$.
Because of the assumption of the lemma, $u_0\le u^*$ and $u_0\not\equiv u^*$ a.e.~in $\R^N$, and hence $u_0< u^*$ on a set of positive measure.
Because of (\ref{S5L1E1}), for each $t_0\in (0,T)$, there exist $R_0>0$ and $\delta>0$ such that
$\eta(x,t)\ge\delta$ a.e.~in $B(0,R_0)$ for all $t_0<t<T$.
Then,
\[
0\le w_2(x,t)\le w_1(x,t)=u^*(x)-\eta(x,t)\le u^*(x)-\delta
\ \ \textrm{a.e.~in $B(0,R_0)$ for all $t_0\le t<T$.}
\]

\noindent
{\it Step 3:} We show that there exists a small $\e>0$ such that
\begin{equation}\label{S5L1E4-}
    \sup_{t_0\le t<T}\left\| f(w_3(x,t))\right\|_{L^{\frac{N}{2}+\e}_{\ul}(\RN)}<\infty.
\end{equation}

By Lemma~\ref{S3L5} there exists $0<\gamma_0<1$ and $R_1\in (0,R_0]$ such that $f(u^*(x)-\delta)\le\gamma_0f(u^*(x))$ a.e.~in $B(0,R_1)$.
By Step 2 we see that
\[
f(w_2(x,t))\le f(u^*(x)-\delta)\le\gamma_0f(u^*(x))\ \ \textrm{a.e.~in $B(0,R_1)$ for all $t_0\le t<T$}.
\]
We define
\begin{equation*}
\begin{split}
z_0(t)&:=S(t)u_0+\int_0^tS(t-s)\left[f(u^*)\chi_{B(0,R_1)^c}(y)\right]\,\di s,\\
z_1(t)&:=\int_0^tS(t-s)\left[\gamma_0f(u^*)\chi_{B(0,R_1)}(y)\right]\,\di s.
\end{split}
\end{equation*}
It is obvious from Lemma~\ref{S4L1} that $z_1(t)\le\gamma_0 u^*$ a.e.~in $\R^N$.
Let $\bw:=z_0+z_1$. Since
\begin{equation*}
\begin{split}
&\bw-S(t)u_0-\int_0^tS(t-s)f(w_2(s))\,\di s\\
&=\int_0^tS(t-s)\left[f(u^*)\chi_{B^c_{R_1}}+\gamma_0f(u^*)\chi_{B_{R_1}}\right]\di s
-\int_0^tS(t-s)f(w_2(s))\, \di s\ge 0,
\end{split}
\end{equation*}
$\bw$ is a supersolution of \eqref{wk} with $k=3$, and hence $w_3\le\bw$ a.e.~in $\RN$ for $0\le t<T$.
Since
\[
\sup_{t_0\le t<T}\left\|S(t)u_0\right\|_{L^{\infty}(\RN)}<\infty\quad\mbox{and}\quad
\sup_{0\le t<T}\left\|f(u^*)\chi_{B(0,R_1)^c}\right\|_{L^{\infty}(\RN)}<\infty,
\]
by the definition of $z_0(t)$ we see that $\sup_{t_0\le t<T}\left\|z_0(t)\right\|_{L^{\infty}(\RN)}<\infty$.
Since
\[
w_3(x,t)\le \bw(t)=z_0(t)+z_1(t)\le\gamma_0u^*(x)+C
\]
a.e.~in $B(0,R_1)$ for all $t_0<t<T$,
there exist $\gamma_1\in(\gamma_0,1)$ and $R_2\in (0,R_1]$ such that $w_3(x,t)\le\gamma_1u^*(x)$ a.e.~in $B(0,R_2)$ for all $t_0\le t<T$.
By  Lemma~\ref{S3L6} that there exists $R_3\in (0,R_2]$ such that
\[
f(w_3(x,t))\le \frac{C}{|x|^{2\gamma_1}}\ \ \textrm{a.e.~in $B(0,R_3)$ for all $t_0\le t<T$}.
\]
Since there exists a small $\e>0$ such that $-\gamma_1 N-2\gamma_1\e+N-1>-1$, we have
\begin{equation}
\label{S5L1E4}
\begin{split}
\int_{B(0,R_3)}\left|f(w_3(x,t))\right|^{\frac{N}{2}+\e} \, \di x
&\le \int_{B(0,R_3)}\left(\frac{C}{|x|^{2\gamma_1}}\right)^{\frac{N}{2}+\e} \, \di x\\
&\le C\int_0^{R_3}r^{-2\gamma_1\left(\frac{N}{2}+\e\right)}r^{N-1}\,\di r<\infty.
\end{split}
\end{equation}
Here, $R_3$ may be less than $1$ which is the radius of $B(0,1)$ in the definition of the norm of $L^p_{\ul}(\R^N)$.
However, $f(w_3(x,t))$ is essentially bounded on $B(0,R_3)^c\times [t_0,T)$.
Thus,
by \eqref{S5L1E4} we see that
$\sup_{t_0\le t<T}\left\| f(w_3(x,t))\right\|_{L^{{N/2}+\e}_{\ul}(\RN)}<\infty$.

\noindent
{\it Step 4:} We show that
\begin{equation}\label{S5L1E4+}
\sup_{2t_0\le t<T}\left\|w_4(x,t)\right\|_{L^{\infty}(\RN)}<\infty.
\end{equation}

Because of the definition of $w_4$,
\begin{equation}\label{S5L1E5}
w_4(t_0)=S(t_0)u_0+\int_0^{t_0}S(t_0-s)f(w_3(s)) \, \di s.
\end{equation}
Using (\ref{S5L1E5}), we have
\begin{equation*}
\begin{split}
&w_4(t)  -S(t-t_0)w_4(t_0)-\int_{t_0}^tS(t-s)f(w_3(s)) \, \di s\\
&=w_4(t)-S(t-t_0)\left[
S(t_0)u_0+\int_0^{t_0}S(t_0-s)f(w_3(s)) \, \di s\right]
-\int_{t_0}^tS(t-s)f(w_3(s)) \, \di s\\
&=w_4(t)-S(t)u_0-\int_0^{t_0}S(t-s)f(w_3(s)) \, \di s-\int_{t_0}^tS(t-s)f(w_3(s)) \, \di s\\
&=w_4(t)-S(t)u_0-\int_0^tS(t-s)f(w_3(s)) \,\di s=0
\end{split}
\end{equation*}
for $t\in(t_0,T)$,
where the last equality follows from the definition of $w_4$.
Thus, by Proposition~\ref{S5P1} we have
\begin{equation*}
\begin{split}
&\left\|w_4(t)\right\|_{L^{\infty}(\RN)}\\
&\le\left\|S(t-t_0)w_4(t_0)\right\|_{L^{\infty}(\RN)}
+\int_{t_0}^t\left\|S(t-s)f(w_3(s))\right\|_{L^{\infty}(\RN)} \, \di s\\
&\le C(t-t_0)^{-N/2}\left\|w_4(t_0)\right\|_{L^1_{\ul}(\RN)}+
\int_{t_0}^tC\left\{(t-s)^{-\frac{N}{2}\frac{1}{\frac{N}{2}+\e}}+1\right\} \, \di s
\sup_{t_0\le t<T}\left\|f(w_3(t))\right\|_{L^{\frac{N}{2}+\e}_{\ul}(\RN)}\\
&\le C\quad\mbox{uniformly for $t\in[2t_0,T)$.}
\end{split}
\end{equation*}
Here, for each fixed $T$, $C$ does not depend on $t$, but $C$ depends on $T$.
This indicates that $\sup_{2t_0\le t<T}\left\|w_4(t)\right\|_{L^{\infty}(\RN)}<\infty$.

\noindent
{\it Step 5:} Conclusion.

Let $w$ be the maximal solution of problem~\eqref{S1E1} which is constructed in Lemma~\ref{S5L0+}.
In Lemma~\ref{S5L1} we already showed that $w$ is a solution in $\RN\times [0,T)$ in the sense of Definition~\ref{S1D1} with $T>0$.
Then, by \eqref{S5L0E1} we have
\[
w(t)\le w_4(t)\ \text{a.e.~in $\RN\times [0,T)$.}
\]
The conclusion of the lemma holds, since \eqref{S5L1E4+} holds for arbitrary small $t_0>0$.
Since $T>0$ is arbitrary,
$w\in L_{loc}^{\infty}(0,\infty;L^{\infty}(\R^N))$, and hence it follows from a parabolic regularity theorem that $w$ is a classical solution for $t\in(0,\infty)$.
Thus, the proof of Lemma~\ref{S5L1} is complete.
\end{proof}
Theorem~\ref{S1T1}~(i) follows from Lemma~\ref{S5L1}.

%%%%%%%%%%%%%%%%%%%%%%%%%%%%%%%%%%%%%%%%%%%%%%%%%%%%%%%
%%%%%%%%%%%%%%%%%%%%%%%%%%%%%%%%%%%%%%%%%%%%%%%%%%%%%%%
%%%%%%%%%%%%%%%%%%%%%%%%%%%%%%%%%%%%%%%%%%%%%%%%%%%%%%%
% Section 6
%%%%%%%%%%%%%%%%%%%%%%%%%%%%%%%%%%%%%%%%%%%%%%%%%%%%%%%
%%%%%%%%%%%%%%%%%%%%%%%%%%%%%%%%%%%%%%%%%%%%%%%%%%%%%%%
%%%%%%%%%%%%%%%%%%%%%%%%%%%%%%%%%%%%%%%%%%%%%%%%%%%%%%%
%\newpage
\section{\bf Nonexistence of solutions of problem~\eqref{S1E1}}

\subsection{Definition of weak solutions}
%%%%%%%%%%%%%%%%%%%%%%%%%%%%%%%%%%%%%
The proof of Theorem~\ref{S1T1}~(iii) is provided in the framework of weak solutions.
First, we introduce the definition of weak solutions of problem~\eqref{S1E1}.

\begin{definition}[{Weak solutions on $\R^N$}]\label{S6D1}
Let $u$ be a nonnegative measurable function on $\mathbb{R}^N \times(0,T)$, where $T\in (0,\infty]$
and $u_0 \in L^1_{loc} (\mathbb{R}^N)$.
We say that $u$ with $ f(u) \in L^1_{loc} (\mathbb{R}^N\times [0,T))$ is a weak solution of problem~\eqref{S1E1} if
$u$ satisfies
\begin{equation}\label{S6D1E1}
\begin{split}
&\int_{0}^{{\tau}} \int_{\mathbb{R}^N} u(-\partial_t -\Delta) \varphi \, \di x\di t\\
&\qquad\qquad = \int_{0}^{{\tau}}\int_{\mathbb{R}^N} f(u) \varphi \, \di x \di t + \int_{\mathbb{R}^N} u_0 \varphi(0)\, \di x
\end{split}
\end{equation}
for  any $0<\tau<T$ and any $0\le \varphi \in C^{2,1}_0 (\mathbb{R}^N\times[0,\tau])$
such that $\varphi(\tau) = 0$.
If  $u$ satisfies \eqref{S6D1E1} with $=$ replaced by $\ge$, then we say that $u$ is a weak supersolution in $\mathbb{R}^N\times [0,T)$.
\end{definition}
We note that any solution of problem~\eqref{S1E1} in the sense of Definition~\ref{S1D1} is also a weak solution in the sense of Definition~\ref{S6D1}.
In particular, the singular stationary solution of problem~\eqref{S1E1} is a weak solution in the sense of Definition \ref{S6D1}.
For details, see Theorem~\ref{S7T1} in Appendix below. 

%
%%%%%%%%%%%%%%%%%%%%%%%%%%%%%%%%%%%%%
\subsection{Problem~\eqref{S1E1} in balls.}
%%%%%%%%%%%%%%%%%%%%%%%%%%%%%%%%%%%%%
Let $R>0$. Consider the Cauchy-Dirichlet problem
\begin{equation}\label{S6E1}
	\left\{
	\begin{aligned}
		&\partial_t w  -\Delta  w =  f_{R}(w),	\quad && x\in B(0,R),\,\, t>0,\\
		&w=0,	\quad && x\in \partial B(0,R),\,\, t>0,\\
		&w(x,0) =  w_0(x),		\quad && x\in B(0,R),
	\end{aligned}
	\right.
\end{equation}
where $w_0$ is a nonnegative measurable function on $B(0,R)$ and $ f_{R}(w):= f(w+u^*(R))$.
Here, $u^*$ is the singular solution of problem~\eqref{SS} given in Proposition~\ref{S1P1}.
Obviously, $u^*_{R} := u^* - u^*(R)$ is a  singular stationary solution  of problem~\eqref{S6E1}.

For  $x,y\in \overline{B(0,R)}$ and $t>0$, let $G_R=G_R(x,y,t)$ be the Dirichlet heat kernel on $B(0,R)$.
We formulate the definition of solutions of problem~\eqref{S6E1}.

\begin{definition}\label{S6D2}
Let $w$ be a nonnegative measurable function on $B(0,R)\times(0,T)$, where $T\in(0,\infty]$.
\begin{itemize}
\item[(i)] We say that $w$ is a solution of problem~\eqref{S6E1} in $B(0,R)\times [0,T)$ if
\begin{equation}\label{S6D2E2}
\begin{split}
\infty>w(x,t)
&=\int_{B(0,R)}G_R(x,y, t )w_0(y) \, \di y\\
&\qquad+\int_0^t\int_{B(0,R)}G_R(x,y,t-s)f_R(w(y,s)) \, \di y\di s
\end{split}
\end{equation}
for a.a.~$(x,t)\in B(0,R)\times (0,T)$.
\item[(ii)] If  $u$ satisfies \eqref{S6D2E2} with $=$ replaced by $\ge$, then we say that $w$ is a supersolution in $B(0,R)\times [0,T)$.
\end{itemize}
\end{definition}

\begin{definition}
The minimal solution and the maximal solution of problem~\eqref{S6E1} are defined as in the case of whole space $\mathbb{R}^N$.
\end{definition}

\begin{proposition}\label{S6P4}
If a nonnegative measurable function $w_0$ on $B(0,R)$ satisfies
$w_0 \le u^*_R$ and $w_0 \not\equiv u^*_R$ a.e.~in $B(0,R)$, then problem~\eqref{S6E1} has the maximal solution $w$ in $B(0,R)\times[0,\infty)$ satisfying $w\in L^\infty_{loc}(0,\infty;L^\infty(B(0,R)))$.
\end{proposition}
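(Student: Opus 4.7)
The plan is to replicate the whole-space argument of Section~5 in the Dirichlet setting on $B(0,R)$, with $G$ replaced by the Dirichlet heat kernel $G_R$ (associated semigroup $S_R$) and $f$ by $f_R$. A preliminary step, analogous to Lemma~\ref{S4L1}, is to verify that the stationary singular solution $u^*_R=u^*-u^*(R)$ of \eqref{S6E1} is also a solution in the integral sense of Definition~\ref{S6D2}, i.e.
\[
u^*_R(x)=[S_R(t)u^*_R](x)+\int_0^t [S_R(t-s)f_R(u^*_R)](x)\,\di s
\]
for all $x\in B(0,R)\setminus\{0\}$ and $t>0$. This is obtained by applying Green's identity on the annulus $\{\e<|y|<R\}$, where the outer boundary contribution at $|y|=R$ vanishes because both $u^*_R$ and $G_R(x,\cdot,t)$ are zero there, and letting $\e\to 0$ as in the proof of Lemma~\ref{S4L1}; the decay estimates of Lemma~\ref{S3L3}, which transfer verbatim to $u^*_R$ since it differs from $u^*$ by a constant, make the inner boundary terms vanish.

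With this preparation, I would construct the minimal and maximal solutions via the monotone iteration
\[
w_0=u^*_R,\qquad w_k(t)=S_R(t)w_0+\int_0^t S_R(t-s)f_R(w_{k-1}(s))\,\di s,
\]
and similarly with $v_0=0$. Induction gives the monotone chain $0=v_0\le v_1\le\cdots\le w_1\le w_0=u^*_R$, and the proofs of Lemmas~\ref{S5L0} and~\ref{S5L0+} transfer essentially unchanged, yielding a maximal solution $w=\lim_{k}w_k$ and a minimal solution $v=\lim_{k}v_k$ satisfying \eqref{S6D2E2}.

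The main content is the $L^\infty$ bound, obtained by replicating the five-step scheme of Lemma~\ref{S5L1}. Fix $T>0$ and $t_0\in(0,T/2)$. Step~1 shows $w_1=u^*_R-\eta$ with $\eta(t)=S_R(t)(u^*_R-w_0)\ge 0$. Step~2 uses that $u^*_R-w_0\not\equiv 0$ is nonnegative together with the strict positivity of $G_R$ on compact subsets of $B(0,R)$ to produce $R_0\in(0,R)$ and $\delta>0$ such that $\eta\ge\delta$ on $B(0,R_0)$ for all $t_0\le t<T$; whence $w_2\le u^*_R-\delta$ there. Step~3 rewrites the pointwise bound as $w_2+u^*(R)\le u^*-\delta$ and invokes Lemma~\ref{S3L5} to obtain $f_R(w_2)\le \gamma_0 f(u^*)=\gamma_0 f_R(u^*_R)$ on some $B(0,R_1)$; the supersolution $\bw=z_0+z_1$ is then constructed as in Step~3 of Lemma~\ref{S5L1}, giving $w_3\le \gamma_1 u^*_R+C$ on a smaller ball $B(0,R_2)$, and after absorbing $(1-\gamma_1)u^*(R)+C$ into a slightly larger exponent $\gamma_1'\in(\gamma_1,1)$ valid for small $|x|$, Lemma~\ref{S3L6} yields $f_R(w_3)\le C|x|^{-2\gamma_1'}$ locally, hence $f_R(w_3)\in L^{N/2+\e}(B(0,R))$ for $\e>0$ small. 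Steps~4 and~5 apply the Duhamel identity $w_4(t)=S_R(t-t_0)w_4(t_0)+\int_{t_0}^t S_R(t-s)f_R(w_3(s))\,\di s$ together with the $L^p$--$L^q$ smoothing of the Dirichlet semigroup (which follows from the domination $G_R\le G$ and Proposition~\ref{S5P1}) to conclude that $w_4$ is essentially bounded on $[2t_0,T)\times B(0,R)$. Since $w\le w_4$ and $t_0>0$ was arbitrary, $w\in L^\infty_{loc}(0,\infty;L^\infty(B(0,R)))$.

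I expect the main obstacle to be the bookkeeping in Step~3: the constant shift $u^*(R)$ between $u^*$ and $u^*_R$, together with the additive constant $C$ from the supersolution construction, must be absorbed into a slightly enlarged exponent $\gamma_1'<1$ before Lemma~\ref{S3L6} can be applied, and the splitting of $f_R(u^*_R)$ into a bounded part away from the origin and a singular part near the origin must be arranged carefully to produce a supersolution. The preliminary identification of $u^*_R$ as an integral stationary solution of \eqref{S6E1}, while a routine Green's identity computation on a thin annulus, is essential because the whole five-step scheme starts from the identity $w_1=u^*_R-\eta$ that it produces.
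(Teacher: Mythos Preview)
Your approach is correct. You replicate the five-step bootstrap of Lemma~\ref{S5L1} in the Dirichlet setting on $B(0,R)$, after first verifying (via Green's identity on an annulus, as in Lemma~\ref{S4L1}) that $u^*_R$ is an integral stationary solution of~\eqref{S6E1}; the bookkeeping with the shift $u^*(R)$ that you flag in Step~3 is handled exactly as you describe, and the $L^p$--$L^q$ smoothing of $S_R$ via $G_R\le G$ is legitimate.

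The paper, by contrast, bypasses the entire five-step argument: it extends $w_0$ by zero to $\mathbb{R}^N$, observes that $\overline{w}_0:=w_0\chi_{B(0,R)}\le u^*_R\chi_{B(0,R)}\le u^*$ with $\overline{w}_0\not\equiv u^*$, applies the already-established Theorem~\ref{S1T1}(i) to obtain a bounded whole-space solution $\overline{u}$ of~\eqref{S1E1}, and then asserts that $\overline{u}$ is a supersolution of~\eqref{S6E1}, concluding $w\le\overline{u}$ by comparison. This is far more economical, but the supersolution claim warrants care: problem~\eqref{S6E1} carries the shifted nonlinearity $f_R(\cdot)=f(\cdot+u^*(R))\ge f(\cdot)$, and since $\overline{u}$ satisfies $\partial_t\overline{u}-\Delta\overline{u}=f(\overline{u})$, the inequality $\partial_t\overline{u}-\Delta\overline{u}\ge f_R(\overline{u})$ (equivalently $f(\overline{u})\ge f(\overline{u}+u^*(R))$) does not follow. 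Your direct argument, while longer, sidesteps this difficulty entirely and is self-contained.
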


\begin{proof}
By the assumption of Proposition~\ref{S6P4}, we can find the maximal solution of problem~\eqref{S6E1} in $B(0,R)\times[0,\infty)$. 
It suffices to prove  $w\in L^\infty_{loc}(0,\infty;L^\infty(B(0,R)))$.

Set $\overline{w}_0(x) := w_0 (x) \chi_{B(0,R)}(x)$.
Obviously, we see that
\[
\overline{w}_0(x) \le  u^*_R(x) \chi_{B(0,R)}(x) = (u^*(x) - u^*(R) )\chi_{B(0,R)}(x) \le u^*(x)
\]
for a.a.~$x\in \mathbb{R}^N$.
Note that $\overline{w}_0\not\equiv u^*$ a.e.~in $\mathbb{R}^N$.
It follows from Theorem~\ref{S1T1} (i) that
problem~\eqref{S1E1} with $u_0=\overline{w}_0$ has a solution $\overline{u}$ in $\mathbb{R}^N\times [0,\infty)$  in the sense of Definition~\ref{S1D1}
satisfying $\overline{u} \in L^\infty_{loc} (0,\infty; L^\infty(\mathbb{R}^N))$.
Since $\overline{u}$ is a supersolution of problem~\eqref{S6E1},
by the comparison principle,  we have $w(x,t) \le \overline{u} (x,t) $ for
a.a.~$(x,t)\in B(0,R)\times(0,\infty)$.
Furthermore, 
\[
\overline{u}  \in L^\infty_{loc} (0,\infty; L^\infty(\mathbb{R}^N)) \subset L^\infty_{loc} (0,\infty; L^\infty(B(0,R))). 
\]
Thus, we see that  $w\in L^\infty_{loc} (0,\infty; L^\infty(B(0,R)))$, and the proof is complete.
\end{proof}

%%%%%%%%%%%%%%%%%%%%%%%%%%%%%%%%%%%%%
\subsection{Proof of Theorem~\ref{S1T1} (iii).}
%%%%%%%%%%%%%%%%%%%%%%%%%%%%%%%%%%%%%

\begin{lemma}\label{S6L5}
Let $u$ be a solution of problem~\eqref{S1E1} in $\mathbb{R}^N\times[0,T)$ in the sense of Definition~{\rm \ref{S1D1}}, where $T\in (0,\infty]$.
Assume that there exists $R>0$ such that
\[
u(x,t) > \alpha u^*(x) \quad \mbox{for a.a.} \, (x,t)\in B(0,R)\times (t_1,T)
\]
for some $\alpha>1$  and $t_1\in [0,T)$. 
Then, for any $t_2\in (t_1, T)$, there exists $R'\in (0,R)$ such that
\[
u(x,t) > \left(\alpha+\frac{1}{2}\right) u^*(x)   \quad \mbox{for a.a.}\, (x,t)\in B(0,R') \times (t_2,T).
\]
\end{lemma}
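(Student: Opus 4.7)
The plan is to combine the Duhamel representation from Definition~\ref{S1D1}, the monotonicity of $f$, and the superexponential growth of $f$ encoded in assumption (A3), together with the stationary identity of Lemma~\ref{S4L1}, to produce a lower bound of the form $u(x,t)\ge K[u^*(x)-C_1]$ for an arbitrarily prescribed large constant $K$, uniformly in $t\in(t_2,T)$; choosing $K$ large and shrinking the ball to exploit $u^*(x)\to\infty$ near the origin then forces the multiplicative factor from $\alpha$ up to $\alpha+\frac12$ (and, in principle, arbitrarily high).

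Setting $\delta:=(t_2-t_1)/2>0$ and applying Duhamel from the intermediate time $t_2-\delta\in(t_1,t_2)$, then dropping the nonnegative initial term, using the hypothesis $u(\cdot,s)>\alpha u^*$ on $B(0,R)$ for $s\in(t_2-\delta,T)$ together with the monotonicity of $f$, and finally changing variables $\tau=t-s$ and keeping only $\tau\in(0,\delta)$, I obtain
\[
u(x,t)\ge W(x):=\int_0^\delta S(\tau)\bigl[f(\alpha u^*)\chi_{B(0,R)}\bigr](x)\,\di\tau
\]
for a.a.\ $(x,t)\in\R^N\times(t_2,T)$; crucially, $W$ is independent of $t$. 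Next I would establish the key growth comparison: for every $K>0$ there exists $\rho\in(0,R]$ with $f(\alpha u^*)\ge Kf(u^*)$ on $B(0,\rho)$. Since $g:=\log f$ is convex and strictly increasing for large arguments by (A2)--(A3), $g'$ is nondecreasing and bounded below by some $c:=g'(u_0)>0$, so convexity yields
\[
\log\frac{f(\alpha u^*(y))}{f(u^*(y))}=g(\alpha u^*(y))-g(u^*(y))\ge g'(u^*(y))(\alpha-1)u^*(y)\ge c(\alpha-1)u^*(y),
\]
which tends to $\infty$ as $|y|\to 0$ by Proposition~\ref{S1P1}.

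Splitting $f(u^*)\chi_{B(0,\rho)}=f(u^*)-f(u^*)\chi_{B(0,\rho)^c}$ and invoking Lemma~\ref{S4L1} at time $\delta$,
\[
\int_0^\delta S(\tau)\bigl[f(u^*)\chi_{B(0,\rho)}\bigr](x)\,\di\tau=u^*(x)-S(\delta)u^*(x)-\int_0^\delta S(\tau)\bigl[f(u^*)\chi_{B(0,\rho)^c}\bigr](x)\,\di\tau.
\]
Since $u^*\in\calL^1_{\ul}(\R^N)$ (Proposition~\ref{S1P1}), Proposition~\ref{S5P1} gives $\|S(\delta)u^*\|_{L^\infty(\R^N)}\le C_0$, and since $u^*$ is radially decreasing by \eqref{S2P1E2}, the quantity $M_\rho:=\sup_{B(0,\rho)^c}f(u^*)=f(u^*(\rho))$ is finite. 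Therefore
\[
W(x)\ge K\bigl[u^*(x)-C_0-M_\rho\delta\bigr]=:K\bigl[u^*(x)-C_1\bigr].
\]
Finally, taking $K:=2(\alpha+1)$ and using $u^*(x)\to\infty$ as $|x|\to 0$, I would choose $R'\in(0,\rho)$ small enough that $u^*(x)\ge 2KC_1/(K-\alpha-\tfrac12)$ on $B(0,R')$; rearranging yields $K[u^*(x)-C_1]\ge(\alpha+\tfrac12)u^*(x)$, which is the claim. The main obstacle I anticipate is the growth comparison in the middle step: it is precisely there that the full strength of assumption (A3) is needed to ensure that any fixed dilation $\alpha>1$ amplifies $f$ by an unbounded factor near the singularity; the remainder is careful bookkeeping with the heat semigroup.
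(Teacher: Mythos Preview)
Your proof is correct and follows a genuinely different route from the paper's. The paper works in the weak formulation of Definition~\ref{S6D1}: it sets $\psi=u-\alpha u^*$, compares it against $\phi=u^*_R-w$ where $w$ is the maximal solution of an auxiliary Cauchy--Dirichlet problem on $B(0,R)$ (Proposition~\ref{S6P4}), proves $\phi\le\psi$ by induction along the monotone iteration using convexity of $f$ via inequalities such as $f(\alpha u^*+\psi)-\alpha f(u^*)\ge f(u^*)-f(u^*-\psi)$, and then reads off the conclusion from the boundedness of $w$ for positive times. You bypass weak solutions entirely, working directly with the Duhamel representation and the stationary identity of Lemma~\ref{S4L1}, together with the amplification $f(\alpha u^*)\ge Kf(u^*)$ near the origin (which uses the log-convexity in (A3) more explicitly than the paper's proof of this lemma does). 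Your argument is considerably more elementary and in fact delivers an arbitrary multiplier in a single pass---so the iteration of Lemma~\ref{S6L5} in the proof of Theorem~\ref{S1T1}(iii) could be shortened---at the cost of relying on the explicit heat-semigroup structure; the paper's comparison approach is heavier but perhaps more portable. Two minor remarks: there is no need to ``restart'' Duhamel at $t_2-\delta$---simply drop the nonnegative pieces $S(t)u_0$ and $\int_0^{t_2-\delta}S(t-s)f(u(s))\,\di s$ from the time-zero formula; and for the strict inequality in the statement, choose $R'$ so that $u^*$ strictly exceeds your threshold on $B(0,R')$.
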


\begin{proof}
We can assume  without loss of generality that $t_1=0$
and
$u_0(x)>\alpha u^*(x)$ for a.a.~$x\in B(0,R)$.
Set $\psi := u - \alpha u^* $.
We have $\psi \ge 0$ for a.a.~$(x,t)\in B(0,R)\times(0,T)$,
and by Theorem~\ref{S7T1} we see that $\psi$ satisfies
\begin{equation}
\label{eq:NE5}
\begin{split}
\infty&>\int_{0}^{{\tau}} \int_{\mathbb{R}^N} \psi(-\partial_t -\Delta) \varphi \, \di x\di t\\
& =\int_{\mathbb{R}^N} \psi_0 \varphi(0)\, \di x+ \int_{0}^{{\tau}} \int_{\mathbb{R}^N}[f(u)-\alpha f(u^*)] \varphi  \, \di x \di t \\
\end{split}
\end{equation}
for any $0<{\tau}<T$ and any $0\le \varphi \in  C^{2,1}_0 (\mathbb{R}^N\times[0,{\tau}])$ such that $\varphi({\tau})=0$,
where $\psi_0 = u_0 - \alpha u^*>0 $.
We show that 
\begin{equation}
\label{eq:6.6}
\left|\int_0^{{\tau}}\int_{\partial B(0,r)} \psi(y,t) \, \di\sigma(y) \, \di t\right| <\infty.
\end{equation}
for a.a.~$r>0$.
Indeed, if it is not the case, then there exist $0<R_0<R_1$ such that
\[
\left|\int_{R_0}^{R_1}\int_0^{{\tau}}\int_{\partial B(0,r)} \psi(y,t) \, \di\sigma(y) \di t \di r \right|=\left|\int_0^{{\tau}}\int_{B(0,R_1)\setminus B(0,R_0)}\psi(y,t)\,\di y\di t\right|=\infty,
\]
and hence $\psi\not\in L^1_{loc}(\mathbb{R}^N\times (0,T))$ which is a contradiction.
Thus, we can assume that \eqref{eq:6.6} with $r=R$ holds.

For $0\le h \in C^\infty_0 ({B(0,R)}\times[0,{\tau}))$, let $\varphi_{h,R}\in C^{2,1}_0 (\overline{B(0,R)}\times[0,{\tau}])$ 
be a solution of 
\begin{equation*}
	%\label{eq:AJP}
\left\{
	\begin{aligned}
		&-\partial_t \varphi_{h,R}  -\Delta  \varphi_{h,R} =  h,	\quad && x\in B(0,R),\,\, t\in (0,{\tau}),\\
		&\varphi_{h,R}(x,t) =  0,							\quad && x\in \partial B(0,R),\,\, t\in (0,{\tau}),\\
		&\varphi_{h,R}(x,{\tau}) =  0,									\quad && x\in B(0,R).
	\end{aligned}
	\right.
\end{equation*}
We have $\varphi_{h,R}\ge0$ for  $x\in B(0,R)$ and $t\in(0,{\tau})$
and $\left\|\varphi_{h,R}\right\|_{L^\infty(B(0,R)\times(0,{\tau}))} < \infty$.
We extend $\varphi_{h,R}$ and $h$ so that $\varphi_{h,R}(x,t) =  0$ and $h=0$ in $B(0,R)^c\times (0, {\tau})$.
Though   $\varphi_{h,R}\not\in C^{2,1}_0 (\mathbb{R}^N\times[0,{\tau}])$ (by Hopf's lemma), 
by using an approximation we  attempt to substitute $\varphi = \varphi_{h,R}$  into \eqref{eq:NE5} as a test function.
Let $\eta\in C^\infty_0(\mathbb{R}^N)$ be  such that $\eta$ is radially symmetric,
\[
\eta\ge0 \quad \mbox{in} \quad \mathbb{R}^N, \qquad \supp \eta \subset \overline{B(0,1)}, \qquad \int_{\mathbb{R}^N} \eta(x) \, \di x=1.
\]
For any $\e>0$, set 
\begin{equation*}
%\label{eq:MF}
\eta_\e (x) := \e^{-N} \eta \left(\frac{x}{\e}\right), \qquad x\in \mathbb{R}^N.
\end{equation*}
We have
\begin{equation}\label{eq:NE6-}
\varphi_{h,R}*\eta_\e \in C^{\infty}_0 ({B(0,R+\e)}\times[0,{\tau}])\subset C^{2,1}_0 (\mathbb{R}^N\times[0,{\tau}])
\end{equation}
with $\varphi_{h,R}*\eta_\e\ge0$ and $\varphi_{h,R}*\eta_\e ({\tau})=0$.
We can substitute $\varphi*\eta_\e$ into \eqref{eq:NE5} and have
\begin{equation}
\label{eq:NE6}
\begin{split}
&\int_{0}^{{\tau}} \int_{B(0,R+\e)} \psi(-\partial_t -\Delta) (\varphi_{h,R}*\eta_\e)\, \di x\di t\\
&=  \int_{B(0,R+\e)} \psi_0 (\varphi_{h,R}*\eta_\e)(0)\, \di x\\
&\qquad\qquad+ \int_{0}^{{\tau}}\int_{B(0,R+\e)} [f(u)- \alpha f( u^*)] (\varphi_{h,R}*\eta_\e) \, \di x \di t .\\
\end{split}
\end{equation}
Since $\psi_0 \in L^1_{loc}(\mathbb{R}^N)$ and $f(u), f(u^*) \in L^1_{loc}(\mathbb{R}^N\times [0,{\tau}])$,
it is easy to see that
\begin{equation*}
\begin{split}
 &\lim_{\e\to0}\int_{B(0,R+\e)} \psi_0 (\varphi_{h,R}*\eta_\e)(0)\, \di x
 =\int_{B(0,R)} \psi_0 \varphi_{h,R}(0)\, \di x,
 \\
 &\lim_{\e\to0}\int_{0}^{{\tau}}\int_{B(0,R+\e)} [f(u)- \alpha f( u^*)] (\varphi_{h,R}*\eta_\e) \, \di x \di t\\
 &\qquad\qquad\qquad\qquad  = \int_{0}^{{\tau}}\int_{B(0,R)} [f(u)- \alpha f( u^*)] \varphi_{h,R} \,\di x \di t. 
\end{split}
\end{equation*}
We consider the left hand side of \eqref{eq:NE6}.
For $x\in B(0,R-\e)$ and $t\in(0,T)$,
\begin{equation*}
\begin{split}
(-\partial_t -\Delta) (\varphi_{h,R}*\eta_\e) (x,t)
&= \int_{B(0,\e)} (-\partial_t -\Delta) \varphi_{h,R} (x-y,t) \cdot\eta_\e(y) \, \di y \\
&= \int_{B(0,\e)} h (x-y,t)\eta_\e(y) \, \di y \\
&\to h (x,t) \quad \mbox{as} \quad \e\to0.\\
\end{split}
\end{equation*}
Since $\psi \in L^1_{loc}(\mathbb{R}^N\times[0,{\tau}])$,
this implies that
\begin{equation*}
\begin{split}
&\lim_{\e\to0}\int_{0}^{{\tau}} \int_{B(0,R-\e)} \psi(-\partial_t -\Delta) (\varphi_{h,R}*\eta_\e)\, \di x\di t= 
\int_0^\tau\int_{B(0,R)} \psi(-\partial_t -\Delta) \varphi_{h,R}\, \di x\di t.
\end{split}
\end{equation*}
%
%For $x\in E_\e:= B(0,R+\e)\setminus B(0,R-\e)$, by Green's identity, we have
Let $E_\e:= B(0,R+\e)\setminus B(0,R-\e)$.
By Green's identity, we see that for $x\in E_{\e}$ and $t\in(0,T)$,
\begin{equation*}
\begin{split}
(-\partial_t -\Delta_x) (\varphi_{h,R}*\eta_\e) (x,t)
&= \int_{\mathbb{R}^N} (-\partial_t -\Delta_x)\eta_\e  (x-y)\cdot\varphi_{h,R}(y) \, \di y \\
&= \int_{B(0,R) \cap B(x,\e)} (-\partial_t -\Delta_y)\eta_\e  (x-y)\cdot\varphi_{h,R}(y,t) \, \di y \\
&= \int_{B(0,R) \cap B(x,\e)}\eta_\e  (x-y) (-\partial_t -\Delta_y)\varphi_{h,R}(y,t) \, \di y\\ 
&\qquad+ \int_{\partial B(0,R) \cap B(x,\e)} \eta_\e  (x-y)\frac{\partial  \varphi_{h,R}}{\partial n}(y,t) \, \di \sigma(y)\\
&= \int_{B(0,R) \cap B(x,\e)}\eta_\e  (x-y) h(y,t) \, \di y\\
&\qquad+ \int_{\partial B(0,R) \cap B(x,\e)} \eta_\e  (x-y)\frac{\partial  \varphi_{h,R}}{\partial n}(y,t) \, \di \sigma(y).\\
\end{split}
\end{equation*}
Since $\psi \in L^1_{loc}(\mathbb{R}^N\times[0,{\tau}])$, $\psi \ge 0$ on $\partial B(0,R)\times [0,{\tau}]$, and
\[
\int_{\partial B(0,R) \cap B(x,\e)} \eta_\e  (x-y)\frac{\partial  \varphi_{h,R}}{\partial n}(y,t) \, \di \sigma(y)=
\int_{\partial B(0,R)} \eta_\e  (x-y)\frac{\partial  \varphi_{h,R}}{\partial n}(y,t) \, \di \sigma(y),
\]
by  Hopf's lemma and \eqref{eq:6.6} we obtain
\begin{equation*}
\begin{split}
&\int_{0}^{{\tau}} \int_{E_\e} \psi(-\partial_t -\Delta) (\varphi_{h,R}*\eta_\e)\, \di x\di t\\
&\qquad = \int_{0}^{{\tau}} \int_{E_\e}  \psi(x,t)\int_{B(0,R) }   \eta_\e  (x-y) h(y,t) \, \di y\di x \di t\\
&\qquad\qquad + \int_{0}^{{\tau}} \int_{\mathbb{R}^N}  \psi(x,t)\chi_{E_\e}(x)\int_{\partial B(0,R)}\eta_\e  (x-y) \frac{\partial  \varphi_{h,R}}{\partial n}(y,t) \, \di \sigma(y)\di x\di t\\
&\qquad = \int_{0}^{{\tau}} \int_{E_\e}  \psi(x,t) (h*\eta_\e)(x,t) \,\di x \di t\\
&\qquad\qquad +\int_0^{{\tau}} \int_{\partial B(0,R)} \int_{\mathbb{R}^N} \eta_{\e}(y-x) \psi (x,t) \, \di x \frac{\partial \varphi_{h,R}}{ \partial n}(y,t) \, \di \sigma(y) \di t\\
&\qquad\to  \int_{0}^{{\tau}} \int_{\partial B(0,R)}  \psi(y,t) \frac{\partial  \varphi_{h,R}}{\partial n}(y,t) \, \di \sigma(y)\di t \le 0 \quad \mbox{as} \quad \e\to0.\\
\end{split}
\end{equation*}
Now letting $\e\to0$ in \eqref{eq:NE6}, we have 
\begin{equation*}
\begin{split}
&\int_{0}^{{\tau}} \int_{B(0,R)} \psi(-\partial_t -\Delta) \varphi_{h,R}\, \di x\di t\\
&\qquad=  \int_{B(0,R)} \psi_0 \varphi_{h,R}(0)\, \di x+ \int_{0}^{{\tau}}\int_{B(0,R)} [f(u)- \alpha f( u^*)] \varphi_{h,R} \,\di x \di t \\
&\qquad\qquad-\int_{0}^{{\tau}} \int_{\partial B(0,R)}  \psi(x,t) \frac{\partial  \varphi_{h,R}}{\partial n}(x,t) \, \di \sigma(x)\di t \\
&\qquad\ge \int_{B(0,R)} \psi_0 \varphi_{h,R}(0)\, \di x+ \int_{0}^{{\tau}}\int_{B(0,R)} [f(u)- \alpha f( u^*)] \varphi_{h,R} \,\di x \di t \\
&\qquad= \int_{B(0,R)} \psi_0 \varphi_{h,R}(0)\, \di x+ \int_{0}^{{\tau}}\int_{B(0,R)} [f(\alpha u^*+\psi)- f( \alpha u^*)] \varphi_{h,R} \,\di x \di t \\
&\qquad\qquad+ \int_{0}^{{\tau}}\int_{B(0,R)} [f(\alpha u^*)- \alpha  f( u^*)] \varphi_{h,R} \,\di x \di t \\
&\qquad\ge \int_{B(0,R)} \psi_0 \varphi_{h,R}(0)\, \di x+ \int_{0}^{{\tau}}\int_{B(0,R)} [f(\alpha u^*+\psi)- f( \alpha u^*)] \varphi_{h,R}\, \di x \di t. \\
\end{split}
\end{equation*}
Here, we used
\[
 f(\alpha u^*) -\alpha f(u^*) \ge 0
\]
for $x\in \mathbb{R}^N\setminus\{0\}$, because $f'>0$ and $f''>0$ on $(0, \infty)$, and $\alpha\ge1$.
 Furthermore, since
\begin{equation*}
\begin{split}
&f(\alpha u^*+ \psi) - f(\alpha u^*) \ge f(\alpha u^*) - f(\alpha u^*-\psi),
\end{split}
\end{equation*}
we have
\begin{equation}
\label{eq:NE7}
\begin{split}
&\int_{0}^{{\tau}} \int_{B(0,R)} \psi(-\partial_t -\Delta) \varphi_{h,R}\, \di x\di t  -\int_{B(0,R)} \psi_0 \varphi_{h,R}(0)\, \di x\\
&\qquad=\int_{0}^{{\tau}} \int_{B(0,R)} \psi h\, \di x\di t -\int_{B(0,R)} \psi_0 \varphi_{h,R}(0)\, \di x\\
&\qquad\ge  \int_{0}^{{\tau}}\int_{B(0,R)} [ f(\alpha u^*) - f(\alpha u^*-\psi)] \varphi_{h,R} \, \di x \di t \\
&\qquad\ge \int_{0}^{{\tau}}\int_{B(0,R)} [ f(u^*) - f( u^*-\psi)] \varphi_{h,R} \,\di x \di t. \\
\end{split}
\end{equation}

We  take a nonnegative measurable function $w_0$ in $\mathbb{R}^N$ such that 
\[
0\le w_0  \le u^*_{R}, \quad w_0\not \equiv u^*_{R}, \quad \mbox{and} \quad
u^*_{R} - w_0 \le \psi _0, \quad \mbox{a.e.~in} \quad B(0,R).
\]
For example, taking $w_0 := (u^*_{R} - \psi_0)_+\chi_{B(0,R)}$,
we see  from Proposition~\ref{S6P4} that problem~\eqref{S6E1} with   $w(0) = w_0$
has the maximal solution $w$ in $B(0,R)\times[0,\infty)$ in the sense of Definition \ref{S6D2}.
Then  there exists $\{w^{(k)}\}_{k=0}^\infty$ such that
\begin{equation*}
\begin{split}
&w^{(0)}(x,t) := u^*_{R}(x),\\
&w^{(k+1)} (x,t) := \int_{B(0,R)} G_R(x,y,t) w_0(y) \, \di y + \int_0^t \int_{B(0,R)} G_R(x,y,t-s)f_{R}(w^{(k)}(y,s)) \, \di y\di s, 
\end{split}
\end{equation*}
for $k=0,1,2,\cdots$ and $w^{(k)}$ satisfies
\[
u^*_{R} = w^{(0)} \ge w^{(1)} \ge w^{(2)} \ge \cdots \ge w^{(k)} \ge w^{(k+1)} \ge  \cdots \to w \ge0
\]
as $k\to\infty$. 
If we define $\phi^{(k)} := u^*_{R} - w^{(k)}$, we have
\[
0 = \phi^{(0)} \le \phi^{(1)} \le \phi^{(2)} \le \cdots \le \phi^{(k)} \le \phi^{(k+1)} \le \cdots \to u^*_{R}-w =: \phi < \infty
\]
for a.a.~$B(0,R)\times[0,T)$ as $k\to\infty$.  
Since $f(u^*)\in L^1_{loc}(\mathbb{R}^N\times[0,{\tau}])$ and $f_{R}(u^*_{R})\in L^1_{loc}(B(0,R)\times[0,{\tau}])$, by a similar computation to Theorem~\ref{S7T1} below, $\{w^{(k)}\}_{k=0}^\infty$ and $\{\phi^{(k)}\}_{k=0}^\infty$  satisfy
\begin{equation*}
\begin{split}
&\int_{0}^{{\tau}} \int_{B(0,R)} w^{(k+1)}(-\partial_t -\Delta)\varphi_{h,R}\, \di x\di t \\
&\qquad= \int_{0}^{{\tau}} \int_{B(0,R)} w^{(k+1)}h\, \di x\di t \\
&\qquad= \int_{B(0,R)} w_0 \varphi_{h,R}(0)\, \di x
+ \int_{0}^{\tau} \int_{B(0,R)} f_{R}(w^{(k)}) \varphi_{h,R}\, \di x \di t 
\end{split}
\end{equation*}
and
\begin{equation}
\label{eq:NE8}
\begin{split}
&\int_{0}^{{\tau}} \int_{B(0,R)} \phi^{(k+1)}(-\partial_t -\Delta) \varphi_{h,R}\, \di x\di t  -\int_{B(0,R)} \phi_0 \varphi_{h,R}(0)\, \di x\\
&\qquad=\int_{0}^{{\tau}} \int_{B(0,R)} \phi^{(k+1)}h\, \di x\di t -\int_{B(0,R)} \phi_0 \varphi_{h,R}(0)\, \di x\\
&\qquad=\int_{0}^{{\tau}}\int_{B(0,R)} [f_{R}(u^*_{R}) -f_{R}(w^{(k)}) ]\varphi_{h,R}\di x \di t \\
&\qquad=\int_{0}^{{\tau}}\int_{B(0,R)} [f(u^*) -f(w^{(k)} + u^*(R)) ]\varphi_{h,R} \di x \di t\\
&\qquad= \int_{0}^{{\tau}}\int_{B(0,R)} [f(u^*) + f(u^*-\phi^{(k)})) ]\varphi_{h,R} \di x \di t\\
\end{split}
\end{equation}
for  any $0<{\tau}<T$ and for $k=1,2,\cdots$, where $\phi_0 := u^*_{R}-w_0$.
Note that $\phi^{(k)}=0$ on $\partial B(0,R)$ for all $k=0,1,2,\cdots$.

We claim that $\phi\le \psi $ for a.a.~$(x,t)\in B(0,R)\times (0,T)$.
It suffices to show that $\phi^{(k)} \le \psi$ for all $k=0,1,2,\cdots$.
It is obvious that $0= \phi^{(0)} \le \psi$. We assume that $\phi^{(k)} \le \psi$ for some $k\ge0$.
By \eqref{eq:NE7} and \eqref{eq:NE8} we have
\begin{equation*}
%\label{eq:NE5}
\begin{split}
&\int_{0}^{{\tau}} \int_{B(0,R)} (\psi-\phi^{(k+1)})h\, \di x\di t\\
&\qquad\ge \int_{B(0,R)} (\psi_0-\phi_0) \varphi(0)\, \di x+ \int_{0}^{{\tau}}\int_{B(0,R)} [  f(u^*-\phi^{(k)})  -  f(u^*-\psi)] \varphi_{h,R}\,  \di x \di t \\
&\qquad\ge0
\end{split}
\end{equation*}
for  any $0<{\tau}<T$.
Since $0<{\tau}<T$ and $h$ are arbitrary, we see that $\phi^{(k+1)} \le \psi$ for a.a.~$(x,t)\in B(0,R)\times (0,T)$.
Then the claim follows and we obtain 
\[
\phi(x,t) = u^*(x)- u^*(R) - w(x,t) \le u(x,t) - \alpha u^*(x)  = \psi (x,t)
\]
for a.a.~$(x,t)\in B(0,R)\times (0,T)$. 
Since $\lim_{r\to 0} u^*(r) = \infty$ and $w\in L^{\infty}_{loc} (0,\infty; L^\infty(B(0,R)))$,
for any $t_2\in(0,T)$,
there exists $R'<R$ such that 
\[
\left(\alpha +\frac{1}{2} \right)u^*(x) < (\alpha + 1)u^*(x) -u^*(R)-  \esssup\displaylimits_{\tau\in (t_2, T)}\left\|w(\tau)\right\|_{L^{\infty}(B(0,R))} \le u(x,t)
\]
for a.a.~$(x,t)\in B(0,R')\times (t_2,T)$. 
Thus, the proof is complete.
\end{proof}

\begin{proof}[Proof of Theorem~{\rm \ref{S1T1} (iii)}.]
Let $u_0$ be as in Theorem~\ref{S1T1} (iii).
Assume that problem~\eqref{S1E1} with $u(0)=u_0$ has a solution $u$ in $\mathbb{R}^N\times[0,T)$ in the sense of Definition~\ref{S1D1} such that $u(x,t)\ge u^*(x)$, where $T\in (0,\infty)$.
We can assume without loss of generality that $T>0$ is sufficiently small.
Define $\psi(x,t) := u(x,t) - u^*(x)$.
Note that $\psi\ge 0$ by the assumption.
It follows from Theorem~\ref{S7T1} that 
\begin{equation*}
\begin{split}
&\int_{0}^{{\tau}} \int_{\mathbb{R}^N} \psi(-\partial_t -\Delta) \varphi \, \di x\di t\\
&\qquad= \int_{\mathbb{R}^N} \psi_0 \varphi(0)\, \di x+ \int_{0}^{{\tau}} \int_{\mathbb{R}^N} [f(u^*+\psi)- f( u^*)] \varphi  \di x \di t 
\end{split}
\end{equation*}
for any $0<{\tau}<T$ and any $0\le \varphi \in   C^{2,1}_0 (\mathbb{R}^N\times[0,{\tau}])$ with $\varphi({\tau})=0$,
where $\psi_0 = u_0 -  u^*  \ge0$.
Since $f'>0$ and $f''>0$ on $(0, \infty)$, we have
\[
f(u^*) - f(u^*-\psi) \ge0, \qquad f(u^*+\psi) - f(u^*) \ge0,
\]
and
\[
f(u^*+ \psi) - f(u^*) \ge f(u^*) - f(u^*-\psi).
\]
The above equality yields
\begin{equation}
\label{eq:4.1}
\begin{split}
&\int_{0}^{{\tau}} \int_{\mathbb{R}^N} \psi(-\partial_t -\Delta) \varphi \, \di x\di t\\
&\qquad\ge \int_{\mathbb{R}^N}\psi_0 \varphi(0)\, \di x+ \int_{0}^{{\tau}}\int_{\mathbb{R}^N} [f(u^*) - f(u^*-\psi)] \varphi \, \di x \di t.
\end{split}
\end{equation}

Now we choose a nonnegative function $w_0$ on $\mathbb{R}^N$ such that
$w_0\le u^*$ a.e.~in $\mathbb{R}^N$, $w_0 \not\equiv u^*$ a.e.~in $\mathbb{R}^N$, and
\begin{equation}
\label{eq:4.2}
0 \le u^*(x) - w_0(x) \le u_0(x) - u^*(x) = \psi_0 \quad \mbox{for a.a.}\ x\in \mathbb{R}^N.
\end{equation}
Then there exist the maximal solution $w$ of problem~\eqref{S1E1} with $u(0) = w_0$
and  $\{w^{(k)}\}_{k=0}^\infty$ such that
\begin{equation*}
\begin{split}
&w^{(0)}(t) := u^*,\\
&w^{(k+1)} (t) := S(t) w_0 + \int_0^t S(t-s)f(w^{(k)}(s)) \, \di s \quad \mbox{for} \quad k=0,1,2,\cdots,
\end{split}
\end{equation*}
and $w^{(k)}$ satisfies
\[
u^* = w^{(0)} \ge w^{(1)} \ge w^{(2)} \ge \cdots \ge w^{(k)} \ge w^{(k+1)} \ge  \cdots \to w \ge0
\]
as $k\to\infty$. 
In particular, $w\in L^\infty_{loc} (0,\infty; L^\infty(\mathbb{R}^N))$.
Let $\phi^{(k)} := u^* - w^{(k)}$. Then,
\[
0 = \phi^{(0)} \le \phi^{(1)} \le \phi^{(2)} \le \cdots \le \phi^{(k)} \le \phi^{(k+1)} \le \cdots \to u^*-w =: \phi < \infty
\]
for a.a.~$(x,t)\in\mathbb{R}^N\times[0,\infty)$ as $k\to\infty$.  
Since $f(u^*)\in L^1_{loc}(\mathbb{R}^N)$, by a similar computation to Theorem~\ref{S7T1} below,  $\{w^{(k)}\}_{k=0}^\infty$ and $\{\phi^{(k)}\}_{k=0}^\infty$  satisfy
\begin{equation*}
\int_{0}^{{\tau}} \int_{\mathbb{R}^N} w^{(k+1)}(-\partial_t -\Delta) \varphi \, \di x\di t = \int_{0}^{{\tau}} \int_{\mathbb{R}^N} f(w^{(k)}) \varphi  \di x \di t + \int_{\mathbb{R}^N} w_0 \varphi(0)\, \di x
\end{equation*}
and
\begin{equation*}
\begin{split}
&\int_{0}^{{\tau}} \int_{\mathbb{R}^N} \phi^{(k+1)}(-\partial_t -\Delta) \varphi \, \di x\di t \\
&\qquad= \int_{0}^{{\tau}}\int_{\mathbb{R}^N} [f(u^*) -f(w^{(k)}) ]\varphi \, \di x \di t + \int_{\mathbb{R}^N} \phi_0 \varphi(0)\, \di x\\
&\qquad= \int_{0}^{{\tau}}\int_{\mathbb{R}^N} [f(u^*) -f(u^*-\phi^{(k)}) ]\varphi  \,\di x \di t + \int_{\mathbb{R}^N} \phi_0 \varphi(0)\, \di x
\end{split}
\end{equation*}
for  any $0<{\tau}<T$ and any $0\le \varphi \in C^{2,1}_0 (\mathbb{R}^N\times[0,{\tau}])$ with $\varphi({\tau})=0$
for $k=1,2,\cdots$, where $\phi_0 := u^*-w_0$.

We claim that $\phi \le \psi$ for a.a.~$(x,t)\in \mathbb{R}^N\times (0,T)$.  It suffices to show $\phi^{(k)} \le \psi$ for all $k=0,1,2,\cdots$.
It is obvious that $0= \phi^{(0)} \le \psi$. We assume that $\phi^{(k)} \le \psi$ for some $k\ge0$.
By \eqref{eq:4.1} and \eqref{eq:4.2} we have
\begin{equation}\label{CP4H}
\begin{split}
\infty&>\int_{0}^{{\tau}} \int_{\mathbb{R}^N} (\psi-\phi^{(k+1)})(-\partial_t -\Delta) \varphi \, \di x\di t\\
&\ge \int_{\mathbb{R}^N} (\psi_0-\phi_0) \varphi(0)\, \di x+ \int_{0}^{{\tau}}\int_{\mathbb{R}^N} [ f(u^*-\phi^{(k)})  - f(u^*-\psi) ] \varphi  \di x \di t \\
&\ge  0\\
\end{split}
\end{equation}
for any $0<{\tau}<T$ and  any $0\le \varphi \in  C^{2,1}_0 (\mathbb{R}^N\times[0,{\tau}])$ with $\varphi({\tau})=0$.
As in the proof of Lemma~\ref{S6L5} we shall employ $\varphi_{h,R}*\eta_{\e}$ given by \eqref{eq:NE6-} as a test function.
Then we take $\e\to 0$ to obtain \eqref{CP4H} with $\varphi_{h,R}$ instead of $\varphi$.
Since $h$, $R$, and $\tau$ are arbitrary, we see that $\phi^{(k+1)} \le \psi$ for a.a.~$(x,t)\in \mathbb{R}^N\times (0,T)$.
Then the claim follows and we have 
\[
\phi(x,t) = u^*(x) - w(x,t) \le u(x,t) - u^*(x)  = \psi (x,t)
\]
for a.a.~$(x,t)\in \mathbb{R}^N\times (0,T)$.
Since $\lim_{r\to 0} u^*(r) = \infty$ and $w\in L^{\infty}_{loc} (0,\infty; L^\infty(\mathbb{R}^N))$,
for any $t_1 \in (0,T)$, there exists $R>0$ such that 
\[
\frac{3}{2}u^*(x) <  2u^*(x) -  \esssup\displaylimits_{\tau\in (t_1, T)}\left\|w(\tau)\right\|_{L^\infty (\R^N)} \le u(x,t)
\]
for a.a.~$(x,t)\in B(0,R)\times (t_1,T)$.
We can arbitrarily choose $t_1,t_2 \in (0,T)$ in Lemma~\ref{S6L5}.
Applying Lemma~\ref{S6L5} finitely many times, we see that
for any sufficiently large $\alpha^*>1$ and any $\tau^* \in (0,T)$, there exists  $R^*>0$ 
such that
\begin{equation}\label{eq:4.4}
\alpha^* u^*(x)  \le  u(x,t) 
\end{equation}
for a.a.~$(x,t)\in B(0,R^*)\times (\tau^*,T)$.
Since $\log f(u)$ is convex for sufficiently large $u>0$, we have
\[
\log f(\alpha^* u) \ge \alpha^* \log f(u) \quad \mbox{for sufficiently large} \quad u>0,
\]
and this implies that
\begin{equation*}
%\label{eq:4.6}
f(\alpha^* u) \ge f(u)^{\alpha^*}\quad \mbox{for sufficiently large} \quad u>0.
\end{equation*}
If we take  a sufficiently large $\alpha^*>1$ and  a sufficiently small $r^*>0$ if necessary, this together with  Lemma~\ref{S3L4} and \eqref{eq:4.4} implies that
\begin{equation*}
\begin{split}
\int_{B(0,r^*)} f(u(x,t)) \, \di x 
&\ge \int_{B(0,r^*)} f(\alpha^* u^*(x)) \, \di x \\
&\ge \int_{B(0,r^*)} f( u^*(x))^{\alpha^*} \, \di x \\
&\ge C \int_0^{r^*} f( u^*(r))^{\alpha^*} r^{N-1} \, \di r \\
&\ge C  \int_0^{r^*}r^{N-2\alpha^*(1-\delta)-1}  \, \di r =\infty \\
\end{split}
\end{equation*}
for a.a.~$t\in (\tau^*, T)$.
This implies that $f(u) \not\in L^1_{\loc}(\mathbb{R}^N\times[0,T))$.
Therefore, we see that problem~\eqref{S1E1} has no local-in-time solutions.
Thus, the proof is complete. 
\end{proof}

%%%%%%%%%%%%%%%%%%%%%%%%%%%%%%%%%%%%%%%%%%%%%%%%%%%%%%%
%%%%%%%%%%%%%%%%%%%%%%%%%%%%%%%%%%%%%%%%%%%%%%%%%%%%%%%
%%%%%%%%%%%%%%%%%%%%%%%%%%%%%%%%%%%%%%%%%%%%%%%%%%%%%%%
% Section 7
%%%%%%%%%%%%%%%%%%%%%%%%%%%%%%%%%%%%%%%%%%%%%%%%%%%%%%%
%%%%%%%%%%%%%%%%%%%%%%%%%%%%%%%%%%%%%%%%%%%%%%%%%%%%%%%
%%%%%%%%%%%%%%%%%%%%%%%%%%%%%%%%%%%%%%%%%%%%%%%%%%%%%%%
%\newpage
\section{\bf Appendix}
We study a relation between two notions of solutions.
\begin{theorem}\label{S7T1}
If $u$ is a solution of problem~\eqref{S1E1} in the sense of Definition~{\rm \ref{S1D1}}, then $u$ is also a weak solution of problem~\eqref{S1E1} in the sense of Definition~{\rm \ref{S6D1}}.
\end{theorem}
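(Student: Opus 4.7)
The plan is to recover the weak identity~\eqref{S6D1E1} from the Duhamel identity~\eqref{S1D1E2} by pairing both sides against an arbitrary nonnegative $\varphi\in C^{2,1}_0(\R^N\times[0,\tau])$ with $\varphi(\tau)=0$ and integrating in space-time. Before doing so, I would verify the prerequisite $f(u)\in L^1_{\loc}(\R^N\times[0,T))$ of Definition~\ref{S6D1}: on any compact $K\subset\R^N\times[0,T)$, pick $t_0$ strictly above the time projection of $K$ together with a point $x_0\in\R^N$ at which $u(x_0,t_0)<\infty$ (such a point exists because $u<\infty$ a.e.\ by~\eqref{S1D1E2}). A uniform positive lower bound for $G(x_0-y,t_0-s)$ over $(y,s)\in K$ then allows the finite Duhamel integral at $(x_0,t_0)$ to control $\int_K f(u)\,\di y\,\di s$.

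Multiplying~\eqref{S1D1E2} by $(-\partial_t-\Delta)\varphi$ and integrating over $\R^N\times(0,\tau)$, all relevant integrands are sign-definite up to the known sign of $(-\partial_t-\Delta)\varphi$ on the support of $\varphi$, so Tonelli legitimizes the order swaps and produces
\[
\int_0^{\tau}\!\int_{\R^N} u\,(-\partial_t-\Delta)\varphi \,\di x\di t = \int_{\R^N}u_0(y)\,J_1(y)\,\di y + \int_0^{\tau}\!\int_{\R^N} f(u(y,s))\,J_2(y,s)\,\di y\di s,
\]
where
\[
J_1(y)=\int_0^{\tau}\!\int_{\R^N} G(x-y,t)(-\partial_t-\Delta_x)\varphi\,\di x\di t,
\]
\[
J_2(y,s)=\int_s^{\tau}\!\int_{\R^N} G(x-y,t-s)(-\partial_t-\Delta_x)\varphi\,\di x\di t.
\]
The crux is then to show $J_1(y)=\varphi(y,0)$ for a.e.~$y$ and $J_2(y,s)=\varphi(y,s)$ for a.e.~$(y,s)$.

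For $J_2$, fix $(y,s)$ and split the time integral over $[s,s+\delta]$ and $[s+\delta,\tau]$. On the latter piece, $G(x-y,t-s)$ is smooth in $(x,t)$ and satisfies $\partial_t G-\Delta_x G=0$; integration by parts in $x$ (spatial boundary terms vanish since $\varphi(\cdot,t)$ has compact support) and in $t$ (the $t=\tau$ boundary vanishes by $\varphi(\tau)=0$) leaves only the single boundary contribution $\int_{\R^N}G(x-y,\delta)\varphi(x,s+\delta)\,\di x$, which tends to $\varphi(y,s)$ as $\delta\to 0^+$ by the standard heat-semigroup approximation of the identity and continuity of $\varphi$. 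On the piece $[s,s+\delta]$ the integrand is bounded by $\|(-\partial_t-\Delta)\varphi\|_{L^\infty}\int G(x-y,t-s)\,\di x=O(\delta)$ and thus contributes nothing in the limit. The computation for $J_1$ is identical with $s=0$. Substituting the values $J_1=\varphi(\,\cdot\,,0)$ and $J_2=\varphi$ back into the displayed identity yields exactly~\eqref{S6D1E1}.

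The main technical obstacle will be justifying the passage to the limit $\delta\to 0^+$ \emph{after} integrating against $u_0$ and $f(u)$. This is handled by dominated convergence: the truncated $J_2^{(\delta)}(y,s)$ is uniformly bounded by a constant depending only on $\|\varphi\|_{L^\infty}$, $\|(-\partial_t-\Delta)\varphi\|_{L^\infty}$, and $\tau$, and the limit is identified a.e.\ as just shown; since $u_0\in L^1_{\loc}(\R^N)$ and $f(u)\in L^1_{\loc}(\R^N\times[0,\tau])$ by the first step, and the support of $\varphi$ is compact, the dominated convergence theorem applies and completes the proof. No additional regularity beyond that provided by Definition~\ref{S1D1} is needed.
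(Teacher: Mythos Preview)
Your proposal is correct and follows essentially the same route as the paper: first deduce $f(u)\in L^1_{\loc}$ from the Duhamel identity via a uniform Gaussian lower bound, then recover \eqref{S6D1E1} by pairing \eqref{S1D1E2} with $(-\partial_t-\Delta)\varphi$ and reducing to the heat-kernel identities $J_1(y)=\varphi(y,0)$, $J_2(y,s)=\varphi(y,s)$, which the paper obtains by the same integration by parts written in the reverse direction. One small caution: $(-\partial_t-\Delta)\varphi$ has no fixed sign, so your ``Tonelli'' step is really Fubini after checking absolute integrability; this is exactly what your uniform bound on $J_2^{(\delta)}$ together with $u_0\in L^1_{\loc}$ and $f(u)\in L^1_{\loc}$ provides, and the paper's proof is equally informal on this point.
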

Because of Theorem~\ref{S7T1}, a solution of \eqref{S1E1} given by Theorem~\ref{S1T1}~(i) is also a weak solution.
\begin{proof}
Let $T\in (0,\infty]$ and $\e \in (0,T/2)$.
We find $t\in(T-\e,T)$ such that
\begin{equation*}
\begin{split}
\infty>u(x,t) &\ge \int_{0}^{T-2\e} \int_{\mathbb{R}^N} G(x-y,t-s) f(u(y,s)) \,\di y \di s\\
&=\frac{1}{(4\pi)^\frac{N}{2}} \int_0^{T-2\e} \int_{\mathbb{R}^N}  (t-s)^{-\frac{N}{2}} \exp\left(-\frac{  |x-y|^2}{4(t-s)}\right) f(u(y,s))\, \di y\di s\\
&\ge  CT^{-\frac{N}{2}} \int_0^{T-2\e} \int_{\mathbb{R}^N}  \exp\left(-\frac{ C |x-y|^2}{\e}\right) f(u(y,s))\,\di y \di s
\end{split}
\end{equation*}
for a.a.~$x\in\mathbb{R}^N$. Since $\e\in(0,T/2)$ is arbitrary, we see that $f(u) \in L^1_{loc} (\mathbb{R}^N\times [0,T))$.
For any $0<{\tau}<T$, let $\varphi\in C^\infty_0(\mathbb{R}^N \times[0,{\tau}])$ with $\varphi({\tau})=0$. 
By the integral by parts  we have
\begin{equation*}
\begin{split}
&\int_{\mathbb{R}^N} u_0(y)\varphi(y,0) \, \di y\\
&\qquad= \int_{\mathbb{R}^N} \left(\int_0^{{\tau}}\int_{\mathbb{R}^N}(\partial_t-\Delta_x)G(x-y,t)\cdot \varphi(x,t)\,\di x \di t+ \varphi(y,0)\right) u_0(y)\,\di y\\
&\qquad= \int_{\mathbb{R}^N} \int_0^{{\tau}}\int_{\mathbb{R}^N}G(x-y,t)(-\partial_t-\Delta_x)\varphi(x,t) \cdot  u_0(y) \,\di x \di t \di y\\
&\qquad= \int_0^{{\tau}}\int_{\mathbb{R}^N}\left(\int_{\mathbb{R}^N}G(x-y,t)u_0(y)\,\di y\right)(-\partial_t-\Delta_x)\varphi(x,t)\,\di x \di t. 
\end{split}
\end{equation*}
Similarly, we have
\begin{equation*}
\begin{split}
&\int_0^{{\tau}}\int_{\mathbb{R}^N} \varphi(y,s)  f(u(y,s))\, \di y \di s\\
&\qquad= \int_0^{{\tau}}\int_{\mathbb{R}^N} \left(\int_s^{{\tau}}\int_{\mathbb{R}^N}(\partial_t-\Delta_x)G(x-y,t-s)\cdot \varphi(x,t)\,\di x\di t + \varphi(y,s)\right.\\
&\qquad\qquad\times f(u(y,s)\bigg)\,\di y\di s\\
&\qquad= \int_0^{{\tau}}\int_{\mathbb{R}^N} \left(\int_s^{{\tau}}\int_{\mathbb{R}^N}G(x-y,t-s)(-\partial_t-\Delta_x)\varphi(x,t)\,\di x\di t \right)f(u(y,s))\,\di y\di s\\
&\qquad= \int_0^{{\tau}}\int_{\mathbb{R}^N} \left(\int_0^{{\tau}}\int_{\mathbb{R}^N}G(x-y,t-s )f(u(y,s))\,\di y \di s \right)(-\partial_t-\Delta_x)\varphi(x,t)\,\di x \di t.
\end{split}
\end{equation*}
Then
\begin{equation*}
\begin{split}
&\int_{0}^{{\tau}}\int_{\mathbb{R}^N} u(x,t)(-\partial_t-\Delta_x) \varphi(x,t)\,\di x\di t\\
&\qquad= \int_{0}^{{\tau}}\int_{\mathbb{R}^N} \biggl(\int_{\mathbb{R}^N}G(x-y,t) u_0(y)\, \di y\\
&\qquad\qquad+\int_0^{{\tau}}\int_{\mathbb{R}^N}G(x-y, t-s)f(u(y,s))\,\di y\di s\biggr)(-\partial_t-\Delta_x) \varphi(x,t)\,\di x\di t\\
&\qquad= \int_{\mathbb{R}^N} \varphi(y,0) u_0(y)\, \di y+ \int_0^{{\tau}}\int_{\mathbb{R}^N} \varphi(y,s) f(u(y,s))\, \di y\di s, \end{split}
\end{equation*}
which implies \eqref{S6D1E1}. 
Then Theorem~\ref{S7T1} follows.
\end{proof}

Combining Theorem~\ref{S1T1}~(i) and (ii) and Theorem~\ref{S7T1}, we obtain the following:
\begin{corollary}
The following hold:\\
{\rm (i)} Let $u$ be a nonnegative solution of problem~\eqref{S1E1} obtained in Theorem~{\rm \ref{S1T1}~(i)} in the sense of Definition~{\rm \ref{S1D1}}.
Then $u$ is also a weak solution in the sense of Definition~{\rm \ref{S6D1}}.\\ 
{\rm (ii)} Let $u^*$ be the singular stationary solution of problem~\eqref{S1E1} obtained in Theorem~{\rm \ref{S1T1}~(ii)} in the sense of Definition~{\rm \ref{S1D1}}.
Then $u^*$ is also a weak solution in the sense of Definition~{\rm \ref{S6D1}}.\\
\end{corollary}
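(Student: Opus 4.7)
The plan is to derive the weak identity \eqref{S6D1E1} directly from the mild identity \eqref{S1D1E2}, splitting the work into two stages: first prove $f(u)\in L^1_{loc}(\RN\times[0,T))$, and then substitute the mild representation of $u$ into the LHS of \eqref{S6D1E1} and match the RHS using a standard duality identity for the heat kernel.

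For the local integrability, I would fix $\e\in(0,T/2)$ and pick some $t\in(T-\e,T)$ at which \eqref{S1D1E2} holds for a.a.~$x$. Dropping the nonnegative initial-data term yields
\[
\infty>u(x,t)\ge\int_0^{T-2\e}\int_{\RN}G(x-y,t-s)f(u(y,s))\,\di y\di s.
\]
For $s\in[0,T-2\e]$ we have $t-s\in[\e,T]$, so on any compact product set in $(x,y)$ the kernel $G(x-y,t-s)$ is bounded below by a strictly positive constant depending only on $T,\e$ and the compact set. The finiteness of $u(x,t)$ for a.a.~$x$ then forces $f(u)\in L^1$ on each such compact box, which gives $f(u)\in L^1_{loc}(\RN\times[0,T))$ after exhausting $\RN\times[0,T)$.

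For the weak identity, take $0<\tau<T$ and $\varphi\in C^{2,1}_0(\RN\times[0,\tau])$ with $\varphi(\tau)=0$. Since $\partial_tG-\Delta_xG=0$ classically for $t>0$ and $G(\cdot,\delta)\to\delta_y$ as $\delta\to 0^+$, integration by parts in both space and time---legitimate because $\varphi$ has compact spatial support and vanishes at $t=\tau$---produces the dual identity
\[
\int_s^{\tau}\int_{\RN}G(x-y,t-s)(-\partial_t-\Delta_x)\varphi(x,t)\,\di x\di t=\varphi(y,s),\qquad 0\le s<\tau,\ y\in\RN.
\]
Multiplying this identity at $s=0$ by $u_0(y)$, and for general $s$ by $f(u(y,s))$, then integrating in $y$ (and in $s$) and swapping orders of integration via Fubini, I would rewrite
\[
\int_{\RN}u_0(y)\varphi(y,0)\,\di y=\int_0^{\tau}\int_{\RN}[S(t)u_0](x)(-\partial_t-\Delta_x)\varphi\,\di x\di t
\]
and analogously express $\int_0^{\tau}\int_{\RN}f(u)\varphi\,\di y\di s$ as the Duhamel convolution from \eqref{S1D1E2} tested against $(-\partial_t-\Delta_x)\varphi$. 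Adding the two displays and invoking \eqref{S1D1E2} on the RHS yields exactly \eqref{S6D1E1}.

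The only delicate point is justifying Fubini and the dual identity for $G$. Nonnegativity of $G$, $u_0$, $f(u)$, and $\varphi$ reduces Fubini to the finiteness of iterated integrals, which follows from $u_0\in L^1_{loc}(\RN)$, the compact support of $\varphi$, and the local integrability of $f(u)$ established in the first step. The dual identity is standard for the heat kernel: rigorously one integrates from $s+\delta$ to $\tau$ and uses $G(\cdot,\delta)*\varphi(\cdot,s+\delta)\to\varphi(\cdot,s)$ as $\delta\to 0^+$, so no analytical input beyond the semigroup structure of $G$ is required.
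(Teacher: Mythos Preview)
Your proposal is correct and follows essentially the same route as the paper: the corollary is an immediate application of Theorem~\ref{S7T1}, whose proof proceeds exactly as you outline---first establishing $f(u)\in L^1_{loc}(\RN\times[0,T))$ via the lower bound on $G(x-y,t-s)$ for $s\le T-2\e$ and $t\in(T-\e,T)$, and then deriving \eqref{S6D1E1} from \eqref{S1D1E2} via the dual heat-kernel identity and Fubini. One small remark: your Fubini justification appeals to nonnegativity, but $(-\partial_t-\Delta_x)\varphi$ has no sign; the absolute integrability you need instead comes from the bound $|(-\partial_t-\Delta_x)\varphi|\le C\chi_K$ for a compact $K$, together with $u_0\in L^1_{loc}(\RN)$ and the local integrability of $f(u)$ already shown.
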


\noindent
{\bf Acknowledgement}\\
The authors would like to thank the referee for pointing out a serious gap in the proof of Theorem~\ref{S1T1}~(iii) of the previous version.
Without his/her comments this research does not accomplish.
YM was supported by JSPS KAKENHI Grant Number 24K00530.\\

\noindent{\bf Conflict of interest}\\
The authors have no relevant financial or non-financial interests to disclose.\\

\noindent{\bf Data Availability}\\
Data sharing is not applicable to this article as no new data were created or analyzed in this study.

%%%%%%%%%%%%%%%%%%%%%%%%%%%%%%%%%%%%%%%%%%%%%%%%%%%%%%%
%%%%%%%%%%%%%%%%%%%%%%%%%%%%%%%%%%%%%%%%%%%%%%%%%%%%%%%
%%%%%%%%%%%%%%%%%%%%%%%%%%%%%%%%%%%%%%%%%%%%%%%%%%%%%%%
% Bibliography
%%%%%%%%%%%%%%%%%%%%%%%%%%%%%%%%%%%%%%%%%%%%%%%%%%%%%%%
%%%%%%%%%%%%%%%%%%%%%%%%%%%%%%%%%%%%%%%%%%%%%%%%%%%%%%%
%%%%%%%%%%%%%%%%%%%%%%%%%%%%%%%%%%%%%%%%%%%%%%%%%%%%%%%
\bigskip

\end{document}